\newtheorem{theorem}{Theorem}
\newtheorem{lemma}[theorem]{Lemma}
\newtheorem{corollary}[theorem]{Corollary}
\newtheorem{hyp}[theorem]{Assumption}
\newtheorem{remark}[theorem]{Remark}
\newcommand{\1}[1]{\bm{1}_{\{#1\}}}
\def\esp{\mathbb E}
\def\pr{\mathbb P}
\def\var{\mathrm{var}}
\def\Nset{\mathbb N} 
\def\Rset{\mathbb R}
\def\mbw{\mathbb W}
\def\mcf{\mathcal F}
\def\mcj{\mathcal J}
\def\Vset{\mathcal X}
\def\NOEset{\mathcal E}
\def\omle{\hat \theta_N}
\def\wlsemodtheta{\hat\theta_N^W}
\def\mcemodtheta{\hat\theta_N^\mbw}
\def\fisher{\mathcal{I}_n}
\def\suite{\mathcal{S}}
\def\total{Q}
\def\dernier{T}
\newcommand{\citedef}{\citep[see~][]}
\title{Parameter estimation of a two-colored urn model class}
\author{Line Le Goff \and Philippe Soulier}
\begin{document}
\maketitle
\begin{abstract}
  Though widely used in applications, reinforced random walk on graphs have never been the subject
  of a valid statistical inference. We develop in this paper a statistical framework for a general
  two-colored urn model. The probability to draw a ball at each step depends on the number of balls
  of each color and on a multidimensional parameter~$\theta$ through a function~$f$, called a choice
  function. We introduce two estimators of~$\theta$: the maximum likelihood estimator and a weighted
  least squares estimator which is less efficient, but is closer to the calibration techniques used
  in the applied literature. In general, the model is an inhomogeneous Markov chain and this
  property makes the estimation of the parameter impossible on a single path, even if it were
  infinite. Therefore we assume that we observe i.i.d.~experiments, each of a predetermined finite
  length. This is coherent with the usual experimental set-ups. We apply the statistical framework
  to a real life experiment: the selection of a path among pre-existing channels by an ant
  colony. We performed experiments, which consisted of letting ants pass through the branches of a
  fork. We consider the particular urn model proposed by J.-L.~Deneubourg in 1990 to describe this
  phenomenon. We simulate this model for several parameter values in order to assess the accuracy of
  the MLE and the WLSE. Then we estimate the parameter from the experimental data and evaluate
  confident regions with Bootstrap algorithms. The findings of this paper do not contradict the
  biological literature, but give statistical significance to the values of the parameter found
  therein.
\end{abstract}
\clearpage

\section{Introduction}

Urn models have been studied for nearly one century. In~1931, G.~P\'olya provided the first probabilistic result on the game consisting in drawing a ball from an urn initially containing one red ball and one black ball~\citedef{Polya1931}. At each time step, a ball is drawn and put back in the urn with an additional ball of the same color. The probability to draw a red ball is the proportion of red balls in the urn. G.~P\'olya proved that, as the number of draws tends to infinity, the proportion of red balls tends to a random variable following the uniform distribution on $[0,1]$.

The P\'olya urn is easily generalizable to a large class of two-colored urn models characterized by a choice function $f:\Theta\times\Nset\times\Nset\to[0,1]$ which itself depends on a parameter~${\theta\in\Theta\subset\Rset^d}$, ${d\le1}$. Let~$R_n$ and~$B_n$ be the numbers of red and black balls in the urn after~$n$~draws. Note that, by construction,~${R_n+B_n = n}$.  The
probability that the ${(n+1)}$-th ball is red is given by
\begin{align}
  \label{eq:model-general}
  prob_{n+1}^R = f(\theta, R_n,B_n) = f(\theta, R_n,n-R_n) \; .
\end{align}
Consequently, the probability to draw a black ball
at time~$n+1$ is $1-prob_{n+1}^R$.

The goal of this paper is to propose a valid statistical methodology to estimate the
parameter~$\theta$. We suppose that we observe~$N$ independent paths, each consisting of a sequence
of $n$~colors drawn by the model defined in~\eqref{eq:model-general}. The statistical theory is
developed as~$n$~is fixed and~$N$ tends to infinity. We choose this framework since in some models,
it is not possible to obtain consistent estimators of the parameter with only one path, even if its
length $n$ increases to infinity. 
Moreover data from real experiments comprises a set of finite paths. We define estimators
for~$\theta$ that we prove to be consistent and asymptotically  normal under some usual regularity
assumptions on the model~\eqref{eq:model-general}. We study more precisely two particular cases: the
maximum likelihood estimator (MLE) and the weighted least squares estimators (WLSE).

We have applied these statistical tools to the problem of path formation by an ant colony. One of the
fundamental factors affecting an organism's survival is its ability to optimally and dynamically
exploit its environment. For example, in order to take advantage of the best sites of resources,
housing or reproduction, these areas must be discovered and exploited at the earliest opportunity.
Many species of ants rise to this challenge by developing a network of paths, which connects
different strategic sites such as nests and food sources. These paths consist of pheromones,
attractive chemical substances. We focus on a specific aspect of this phenomenon: the selection of a
path among pre-existing channels. When exploring their environment, ants often face bifurcations and
must bypass obstacles. As shown by experimental studies, the laying of pheromones by ants passing
successively through a bifurcation results in two possible outcomes: either one branch is eventually
selected and the other abandoned, or both branches end up being uniformly chosen \citedef{Deneubourg1990}.
 The analysis of the spontaneous path formation by a colony of ants is made
difficult by the absence of any means to measure precisely the quantity of, or even detect, the
pheromones laid by the ants.

It is commonly assumed that when approaching a bifurcation, ants choose a branch and lay a certain
constant amount of pheromone without ever turning back. Consequently, the quantity of pheromone laid
on a branch is proportional to the number of ants which passed through it. Thus this phenomenon can
be described by a urn model as proposed by J.-L.~Deneubourg et
al. in~1990 \citedef{Deneubourg1990}. More precisely they define a choice function with a two-dimension
parameter $(\alpha,c)\in(0,1)^2$ such that the probability for an ant to choose the right branch
after~$R_n$~passages through the right branch and~$n$~passages in total is given by
\begin{equation}
\label{eq:model-bio}
Prob_{n+1}^R = \frac{(c+R_n)^\alpha}{(c+R_n)^\alpha+(c+n-R_n)^\alpha}\;.
\end{equation}
The probability to choose the left branch is consequently $1-Prob_{n+1}^R$. The
parameter~$\alpha$~makes this model non linear with respect to the proportion of passages through
one branch. It models the sensitivity of the ant to the concentration of pheromone. The parameter
$c$ is the intrinsic attractiveness of each branch and can also be interpreted as the inverse of the
attractiveness (or the strength) of the pheromone deposit laid by each
ant.

Several probabilistic studies provide the asymptotic behavior of~$R_n/n$ in terms of~$\alpha$
and~$c$~\citedef{Polya1931,Tarres2011,Davis1990}. The influence of the two parameters is on different
time scales, but they can contribute to the same effect (selection of one branch or unifomization of
the traffic on the two branches) or have antagonistic effects. The model is thus characterized by
four phases, according to the values of $\alpha$ and $c$: slow or fast uniformization; slow or fast
selection. The phase most commonly considered in the literature is slow selection. This corresponds
in our model to~$\alpha$ and $c$ larger than~$1$: selection of one branch will eventually happen,
though slowly because of weak pheromone deposits
\citedef{Deneubourg1990,Vittori2006,Garnier2009}. However, the model may account for other
possibilities, such as fast uniformization, which occurs when $\alpha<1$ and~$c>1$. One purpose of
this paper is to investigate more thoroughly these possibilities which have been more or less
overlooked in the previous literature.

Ethological studies have already provided values for the parameter~$(\alpha,c)$, but the methods
used mainly consisted of calibration without control of the statistical validity of these methods
and results \citedef{Deneubourg1990, Vittori2006, Garnier2009, Thienen2014} and in particular these
methods do not produce confidence regions. However this type of information supplies interesting and
important elements to the behavioral discussion. However this type of information supply interesting
and important elements to the behavioral discussion. In this paper, we define the MLE and the WLSE
for the ant behavior model~\eqref{eq:model-bio}. We assess the quality of these estimators in a
simulation experiment. We then use them on experimental data provided by a real life experiment
performed to this purpose with ants. We also compute confidence region by a Bootstrap algorithm.

The model~\eqref{eq:model-bio} can be applied, {\it mutatis mutandis}, to many other fields. For
instance we can consider a fork with two branches as a neuron having two axons. During each period
of time, the length of one of the two axons increases. The longer an axon is, the higher the
probability that it will further grow. Thus the dynamic of this biological system may also be
modeled by~\eqref{eq:model-bio} \citedef{Khanin2001}. Furthermore, using the notion of a choice
function (see Section~\ref{sec:generalization}), the model~\eqref{eq:model-general} can be adapted
and applied to many situations where a binary choice occurs, or to even more complex situations such
as networks with several nodes \citedef{Jeanson2003,Pemantle2007}, for instance network exploration by
an ant colony \citedef{Aron1990, Beckers1993, Nicolis1999, Dussutour2005, Nicolis2008, Thienen2014,
  Arganda2014}.

The paper is organized as follows. The model~\eqref{eq:model-general} and the statistical framework
is rigorously defined in Section~\ref{sec:estimation}. Section~\ref{sec:MLE} is focused on the MLE
and Section~\ref{sec:wlse} on the WLSE. Under some usual regularity conditions, we prove that both
estimators are consistent and asymptotically normal. Moreover the MLE is asymptotically
efficient. The numerical implementation of the MLE may be difficult and unstable (and lengthy),
therefore, a WLSE is considered and theoretically studied. This estimator does not match the
theoretical performances of the MLE, but is easier to compute and is popular among
practitioners. Section~\ref{sec:generalization} proposes an extension of the general urn model to a
class of vertex reinforced random walk on graphs. Section~\ref{sec:bio-theory} is an adaptation of
the statistical framework to the ethological problem. In Section~\ref{sec:theorie-interpretation},
we first introduce the assumptions on the ant behavior and then we define the model proposed by
J.-L.~Deneubourg. The four phases of the model are described more precisely and are interpreted from
the point of view of ethology. Section~\ref{sec:parametric} rewrites the MLE and the WLSE for this
particular case. In Section~\ref{sec:fisher-single-path}, we show that it is not possible to
consistently estimate the parameter of the model on a single experiment of length $n$, even if $n$
tends to infinity. In order to assess the performance of the estimators, a short simulation
experiment is reported in Section~\ref{sec:simulation}. Section~\ref{sec:ants} reports the study on
the experimental data. The experimental protocol and the data produced are described in
Sections~\ref{sec:exp_desc} and~\ref{sec:data_repres}. Section~\ref{sec:para_est} supplies the
estimation results (computation of the estimators and their confidence regions). We provide some
concluding remarks in Section~\ref{sec:concluding} and prove the theoretical statistical results of
this paper in Section~\ref{sec:proofs}.

\section{Parameter estimation}
\label{sec:estimation}

Let us first write precisely the model studied and statistical framework used. We assume that $\{X_k,k\geq1\}$ is a sequence of Bernoulli random variables (representing the colors of the balls drawn:~1~for red and~0~for black) and that there exists a function $f_0:\Nset^2\to[0,1]$ such that, for all integers $k$,
\begin{align*}
\pr(X_{k+1} = 1 \mid \mcf_k) = f_0(Z_k,k-Z_k) \; .
\end{align*}
where $Z_0=0$ and for $k\geq1$, $Z_k=X_1+\cdots+X_k$ and $\mcf_k$ is the sigma-field generated by $Z_0,X_1,\dots,X_k$. The random walk  $\{Z_k,k\geq0\}$ is an inhomogeneous Markov chain. For~${n\geq1}$ and a sequence $(e_1,\dots,e_n)\in\{0,1\}^n$, applying the Markov property,  we obtain
\begin{multline}
\label{eq:vrais}
\pr(X_1=e_1,\dots, X_n=e_n) = \prod_{k=0}^{n-1} f_0(z_k,k-z_k)^{e_{k+1}} \{1-f_0(z_k,k-z_k)\}^{1-e_{k+1}} \; ,
\end{multline}
where $z_k = e_1 + \cdots + e_k$ with $z_0=0$, by convention.

We assume that we observe $N$ experiments, each consisting in a path of length~$n$ of~the  model~(\ref{eq:vrais}). In other words, we have a set of $N$ sequences of~$n$~consecutive draws. For~${j=1,\dots,N}$ and~${k=1,\dots,n}$, let~${X_k^j \in\{0,1\}}$ denote the color of the $k$-th ball drawn in the $j$-th experiment. Let $Z_0^j=0$ and $Z_k^j=\sum_{i=1}^k X_i^j$,~${k\geq1}$ be the total number of red balls drawn at time $k$ during the $j$-th experiment, so that $X_k^{j}=Z_k^j-Z_{k-1}^j$.  In all the paper, $n$ will be fixed and our asymptotic results will be obtained with $N$ (the number of experiments) tending to $\infty$.

Let $\Theta\subset\Rset^d$ and $f:\Theta\times \Nset^2\to (0,1)$ be a function, called the {\it choice function} of the urn. We assume that there exists $\theta_0\in\Theta$ such that ${f_0(\cdot,\cdot)=f(\theta_0,\cdot,\cdot)}$, i.e.~for~${k=0,\dots,n-1}$ and~${i=0,\dots,k}$,
\begin{align*}
\pr(X_{k+1}^j = 1 \mid Z_{k}^j=i) = f(\theta_0,i,k-i) \; .
\end{align*}

To proof the consistency and the asymptotic normality of the estimators introduced below, we need the following assumptions on the choice function~$f$. For any function~$g$~defined on $\Theta$, we denote $\dot{g}$ and $\ddot{g}$ the gradient and Hessian matrix with respect to $\theta$, $\partial_sg$~the partial derivative with respect to the~\mbox{$s$-th}~component~$\theta_s$ of $\theta$, $1\leq s \leq d$ and $A'$ the transpose of the vector or matrix~$A$.

\begin{hyp}
\label{hypo:modele-regulier}
\ \begin{enumerate}[(i)]
	\item \label{item:regularite} (Regularity) The set $\Theta$ is a compact with non empty interior. For $0 \leq i \leq k \leq n-1$, $f_0(i,k-i)>0$ and the function~${\theta \rightarrow f(\theta,i,k-i)}$ is twice continuously differentiable on $\Theta$.
	\item \label{item:identifiability} (Identifiability) If $f(\theta_1,i,k-i)=f(\theta_2,i,k-i)$ for all $0 \leq i \leq k\leq n-1$, then $\theta_1=\theta_2$,
	\item \label{item:invertibilite} The $n(n-1)$-dimensional vectors $\{\partial_s f(\theta_0,i,k-i),0\leq i \leq k \leq n-1\}$, $1 \leq s \leq d$, are linearly independent in $\Rset^{n(n-1)}$.
	\end{enumerate}
\end{hyp}
Assumption~\ref{hypo:modele-regulier} ensures that $\theta_0$ is the unique maximizer of $L$ and that the Fisher information matrix
\begin{align*}
\fisher(\theta_0) = - \ddot{L}(\theta_0) = \sum_{k=0}^{n-1} \sum_{i=0}^k \frac{\pr(Z_k=i)}{f_0(i,k-i)\bar f_0(i,k-i)} \dot f_0 (i,k-i) \dot f_0(i,k-i)'
\end{align*}
is invertible, where we denote $\dot{f}_0(i,k-i) = \dot{f}(\theta_0,i,k-i)$ and $\bar f_0(i,k-i)=1-f_0(i,k-i)$. The explicit expression of the probabilities $\pr(Z_k=i)$, $i,k\in\Nset$, is given in Section~\ref{sec:P(Zk=i)}, Equation~\eqref{eq:proba-Zk}.

\subsection{Maximum likelihood estimation (MLE)}
\label{sec:MLE}

The structure of the model~\eqref{eq:vrais} allows to have an explicit expression of likelihood~${V_N(\theta)}$. The independence of the $N$ experiments yields the following multiplicative form 
\begin{eqnarray*}
V_N(\theta) = \prod_{j=1}^N \prod_{k=0}^{n-1} f(\theta,Z_k^j,k-Z_k^j)^{X^j_{k+1}}\{1-f(\theta,Z_k^j,k-Z_k^j)\}^{1-X_{k+1}^j} \; .
\end{eqnarray*}
The log-likelihood function $L_N$ based on $N$ paths, is thus given by
\begin{multline}
\label{eq:def-LN}
L_N(\theta) = \sum_{j=1}^N \sum_{k=0}^{n-1} \big\{ X^j_{k+1} \log f(\theta,Z^j_{k},k-Z_k^j)\\
 + (1-X^j_{k+1}) \log \{1-f(\theta,Z^j_{k},k-Z_k^j)\} \big\} \; .
\end{multline}
Let $\omle$ be the maximum likelihood estimator of~$\theta_0$, that is
\begin{equation}
\label{eq:def-mle}
\omle = \arg\max_{\theta\in\Theta} L_N(\theta) \; .
\end{equation}
Define $L(\theta)=N^{-1} \esp[L_N(\theta)]$ (where the dependence in $n$ is omitted). Then, 
\begin{multline}
\label{eq:def-L}
L(\theta) = \sum_{k=0}^{n-1} \esp\big[ f_0(Z_{k},k-Z_{k}) \log f(\theta,Z_k,k-Z_{k})\\
+ \{1-f_0(Z_k,k-Z_{k})\}\log\{1-  f(\theta,Z_{k},k-Z_{k})\} \big] \; .
\end{multline}

Let $\mathcal{N}(m,\Sigma)$ denote the Gaussian distribution with mean $m$ and covariance~$\Sigma$.
\begin{theorem}
\label{theo:consistence-clt-emv}
If Assumptions~\ref{hypo:modele-regulier}-\eqref{item:regularite} and~\ref{hypo:modele-regulier}-\eqref{item:identifiability} hold then the maximum likelihood estimator~$\omle$ is a strongly consistent estimator of $\theta_0$. If moreover Assumption~\mbox{\ref{hypo:modele-regulier}-\eqref{item:invertibilite}} holds and~$\theta_0$~is an interior point of~$\Theta$, then, as~$N$ tends to~$\infty$, $\sqrt{N} (\omle-\theta_0)$ converges weakly towards~${\mathcal{N}(0,\fisher^{-1}(\theta_0))}$.
\end{theorem}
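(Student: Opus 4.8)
The plan is to treat $\omle$ as a classical M-estimator and follow the two-step route: first establish consistency from a uniform law of large numbers combined with the fact that $\theta_0$ uniquely maximizes the limiting criterion $L$, then upgrade to asymptotic normality by a Taylor expansion of the score equation. The decisive structural simplification is that $n$ is fixed, so each path visits only the finite set of states $\{(i,k-i):0\le i\le k\le n-1\}$, on which $f(\theta,\cdot)\in(0,1)$ is bounded away from $0$ and $1$ uniformly over the compact set $\Theta$ (being continuous on a compact). Consequently $\log f$, $\dot f/f$ and the second derivatives are bounded and continuous in $\theta$, so every domination requirement is automatic. Writing $L_N(\theta)=\sum_{j=1}^N\ell_j(\theta)$, where $\ell_j$ is the contribution of the $j$-th path, the $\ell_j$ are i.i.d.\ with $\esp[\ell_j(\theta)]=L(\theta)$.

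For consistency I would first invoke the uniform law of large numbers: since $\theta\mapsto\ell_1(\theta)$ is continuous, $\Theta$ is compact and $\sup_{\theta\in\Theta}|\ell_1(\theta)|$ is bounded, $\sup_{\theta\in\Theta}|N^{-1}L_N(\theta)-L(\theta)|\convprob 0$. I would then check that $\theta_0$ is the unique maximizer of $L$. Rearranging~\eqref{eq:def-L}, $L(\theta_0)-L(\theta)=\sum_{k=0}^{n-1}\sum_{i=0}^k \pr(Z_k=i)\,K\big(f_0(i,k-i),f(\theta,i,k-i)\big)$, where $K(p,q)=p\log(p/q)+(1-p)\log((1-p)/(1-q))\ge0$ is the Kullback--Leibler divergence between two Bernoulli laws, vanishing iff $p=q$. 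Because $f_0\in(0,1)$ everywhere, every state is reachable, i.e.\ $\pr(Z_k=i)>0$ for all $0\le i\le k$; hence $L(\theta_0)-L(\theta)=0$ forces $f(\theta,i,k-i)=f_0(i,k-i)$ for all such $(i,k)$, and Assumption~\ref{hypo:modele-regulier}-\eqref{item:identifiability} gives $\theta=\theta_0$. Uniform convergence plus a unique, well-separated maximum then yields $\omle\convprob\theta_0$ by the standard argmax argument.

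For asymptotic normality, since $\theta_0$ is interior and $\omle\convprob\theta_0$, with probability tending to one $\omle$ lies in the interior and solves $\dot L_N(\omle)=0$. A componentwise mean value expansion of the score around $\theta_0$ gives $0=\dot L_N(\theta_0)+\ddot L_N(\bar\theta_N)(\omle-\theta_0)$ for some $\bar\theta_N$ between $\omle$ and $\theta_0$, whence $\sqrt N(\omle-\theta_0)=-\big(N^{-1}\ddot L_N(\bar\theta_N)\big)^{-1}N^{-1/2}\dot L_N(\theta_0)$. The score $N^{-1/2}\dot L_N(\theta_0)=N^{-1/2}\sum_j\dot\ell_j(\theta_0)$ is a normalized sum of i.i.d.\ vectors; differentiating $\esp[\ell_j(\theta)]=L(\theta)$ (a finite sum, so the interchange is free) shows $\esp[\dot\ell_j(\theta_0)]=\dot L(\theta_0)=0$, and the information identity $\var(\dot\ell_j(\theta_0))=-\esp[\ddot\ell_j(\theta_0)]=\fisher(\theta_0)$ identifies the covariance, so the multivariate CLT gives $N^{-1/2}\dot L_N(\theta_0)\weakconv\mathcal N(0,\fisher(\theta_0))$. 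A second application of the uniform law of large numbers to the Hessian, with $\bar\theta_N\convprob\theta_0$ and continuity, yields $N^{-1}\ddot L_N(\bar\theta_N)\convprob\ddot L(\theta_0)=-\fisher(\theta_0)$. Slutsky's lemma then delivers $\sqrt N(\omle-\theta_0)\weakconv\fisher^{-1}(\theta_0)\,\mathcal N(0,\fisher(\theta_0))=\mathcal N(0,\fisher^{-1}(\theta_0))$.

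Two points require genuine verification rather than bookkeeping, and these are where I expect the real work. The first is the reachability claim $\pr(Z_k=i)>0$ for every $0\le i\le k\le n-1$, which links the Kullback--Leibler identity to the full identifiability hypothesis; it follows by an easy induction from $f_0\in(0,1)$ via~\eqref{eq:proba-Zk}, but it must be stated, since without it identifiability over all states would not be available. The second, needed both to invert the Hessian limit and to obtain a nondegenerate limit law, is the invertibility of $\fisher(\theta_0)$: for $v\in\Rset^d$ one has $v'\fisher(\theta_0)v=\sum_{k,i}\frac{\pr(Z_k=i)}{f_0\bar f_0}\big(v'\dot f_0(i,k-i)\big)^2$, which vanishes only if $v'\dot f_0(i,k-i)=0$ at every reachable state; Assumption~\ref{hypo:modele-regulier}-\eqref{item:invertibilite} (linear independence of the gradient-component vectors) is exactly what forces $v=0$, so $\fisher(\theta_0)$ is positive definite.
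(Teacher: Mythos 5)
Your proof is correct, but it takes a genuinely different route from the paper's. You treat $\omle$ as a classical M-estimator for i.i.d.\ data: the probabilistic engine is the multivariate CLT applied directly to the i.i.d.\ path scores $\dot\ell_j(\theta_0)$, combined with the information identity $\var(\dot\ell_1(\theta_0))=-\esp[\ddot\ell_1(\theta_0)]=\fisher(\theta_0)$, so the sandwich variance collapses to $\fisher^{-1}(\theta_0)$ in one step. The paper instead channels all the randomness through the empirical occupation and transition frequencies on the finite state space: it first proves a CLT for $\{p_N(i,k-i)\}$ (Lemma~\ref{lem:clt-empirique}, whose diagonal covariance reflects the Markov structure), then establishes a single theorem for general minimum-contrast estimators of the form $\arg\min_\theta\sum_{k,i}w_N(i,k-i)\,G\bigl(p_N(i,k-i),f(\theta,i,k-i)\bigr)$ via a Taylor expansion of the gradient and the delta method (exploiting $\partial_2G(p,p)=0$), and finally deduces Theorem~\ref{theo:consistence-clt-emv} as the special case $G(p,q)=-p\log q-(1-p)\log(1-q)$, $w_N=a_N$, verifying that $G(p,q)-G(p,p)$ is a Kullback--Leibler divergence. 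Both arguments rest on exactly the two substantive verifications you isolate, namely reachability $\pr(Z_k=i)>0$ for all states (done in the paper inside Lemma~\ref{lem:poids}) and positive definiteness of the limiting Hessian via Assumption~\ref{hypo:modele-regulier}-\eqref{item:invertibilite}, so you have correctly identified where the real content lies. What your route buys is economy and self-containedness: no CLT for $p_N$, no general contrast framework, and the limiting covariance is identified without computing a sandwich. What the paper's route buys is reuse: the same general theorem immediately yields Theorem~\ref{theo:wlse} for the weighted least squares estimators, whose sandwich variance does not collapse, which is precisely why the paper organizes the proof around the empirical frequencies rather than the path scores.
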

The proof is in Section~\ref{sec:proof-theo-estim}. It is the consequence of a more general result stated and proved therein.

\subsection{Weighted least squares estimation (WLSE)} 
\label{sec:wlse}

Least squares estimators are very popular among practitioners. Moreover for some urn models, the MLE may be numerically unstable hence difficult (and lengthy) to compute so the WLSE constitutes a convenient alternative. In order to describe this estimator, we introduce some notation. For~${0 \leq i \leq k \leq n-1}$, define
\begin{eqnarray} {a}_N(i,k-i) = \frac1N \sum_{j=1}^N \1{Z_k^{j}=i} \;\; \mbox{ and } \;\; {p}_N(i,k-i) = \frac{\frac1N \sum_{j=1}^N \1{Z_k^{j}=i}X_{k+1}^j}{{a}_N(i,k-i)} \; , \label{eq:empiriques}
\end{eqnarray}
with the convention~$\frac00=0$. The quantity ${a}_N(i,k-i)$ is the empirical probability that~$i$ red balls have been drawn at time $k$ and ${p}_N(i,k-i)$ is the empirical conditional probability that a red ball is again chosen at time~$k+1$ given $i$ red balls were drawn at time
$k$.

We further define ${q}_N(i,k-i)=1-{p}_N(i,k-i)$ and $\bar{f}(\theta,i,k-i)=1-f(\theta,i,k-i)$. Let~$\{w_N(i,k-i), 0\leq i \leq k\}$ be a sequence of weights and define the contrast function
\begin{equation}
\label{eq:def-WN}
W_N(\theta) = \sum_{k=0}^{n-1} \sum_{i=0}^k w_N(i,k-i) \{p_N(i,k-i)-f(\theta,i,k-i)\}^2 \; .
\end{equation}
The weighted least squares estimator minimizes $W_N$, that is
\begin{equation}
\label{eq:def-wlse}
\wlsemodtheta = \arg \min_{\theta\in\Theta} W_N(\theta) \; .
\end{equation}

\begin{theorem}
\label{theo:wlse}
If Assumptions~\ref{hypo:modele-regulier}-\eqref{item:regularite} and~\ref{hypo:modele-regulier}-\eqref{item:identifiability} hold and if the weights $w_N$ converge almost surely to a sequence of positive weights $w_0$, then the weighted least squares estimator $\wlsemodtheta$ is a strongly consistent estimator of $\theta_0$.

If moreover Assumption~\ref{hypo:modele-regulier}-\eqref{item:invertibilite} holds and $\theta_0$ is an interior point of~$\Theta$, then as~$N$ tends to~$\infty$, $\sqrt{N} (\wlsemodtheta-\theta_0)$ converges weakly towards $\mathcal{N}(0,\Sigma_n(\theta_0))$, where~${\Sigma_n(\theta_0)}$ is a definite positive covariance matrix.

If moreover $w_N(i,k-i)={p_N}^{-1}{q_N}^{-1} a_N(i,k-i)$, for all ${0\le i \le k\le n-1}$, the estimator~$\wlsemodtheta$~is asymptotically efficient, i.e. $\Sigma_n(\theta_0) = \fisher(\theta_0)$.
\end{theorem}
The proof is in Section~\ref{sec:proof-theo-estim}, where a explicit expression of $\Sigma_n(\theta_0)^{-1}$ is supplied.

\subsection{Generalization}
\label{sec:generalization}

It is possible to extend the statistical framework introduced in the previous sections to a large class of reinforced random walks on graphs. For instance, let~${G=(\Vset,\NOEset)}$ be a locally finite non oriented graph, with~$\Vset$  the set of its vertices and $\NOEset\subset \big\{\{i,j\}:\; i,j\in \Vset,\,i\neq j\big\}$ the set of its non oriented edges. We denote by $x\sim y$, if $\{x,y\}\in\NOEset$. Let $\Nset^\Vset$ be the set of integer vectors indexed on~$\Vset$. We define~${X=(X_n)_{n\ge 0}}$ a random walk on~$G$, i.e. a sequence of vertices, such that, for all~${n\ge 0}$, $X_{n+1}\sim X_n$. The vector $Z_n\in\Nset^\Vset$ is such that, for all vertex~$x\in\Vset$, $Z_n(x)$ is the number of times the walk~$X$ has visited~$x$ up to time~$n$. Let $\Theta\subset\Rset^d$, $d\ge 1$, we suppose that the walk~$X$ is vertex-reinforced and that there exists a choice function $f:\Theta\times\Vset\times\Nset^\Vset\to[0,1]$ and a parameter~${\theta\in\Theta}$ such that, for all~${n\ge 0}$ and~${x\in\Vset}$,
\begin{eqnarray*}
\pr(X_{n+1}=x|X_0,\cdots,X_n) &=& f(\theta,x,Z_n)\1{x\sim X_{n}}\;.
\end{eqnarray*}

For instance, the choice function can be similar to the one proposed by \mbox{J.-L.} Deneubourg \citedef{Deneubourg1990},
for $n\ge 0$ and~${x\in\Vset}$,
\begin{eqnarray*}
\pr(X_{n+1}=x|X_0,\cdots,X_n) &=& \frac{\big(c+Z_n(x)\big)^\alpha}{\displaystyle\sum_{y\sim X_n} \big(c+Z_n(y)\big)^\alpha}\1{x\sim X_{n}},
\end{eqnarray*}
where $\Theta = (0,+\infty)^2$ and $\theta=(\alpha,c)$.

The statistical framework introduced for the general urn model is easily adaptable to this vertex reinforced random work. Under some adequate regular conditions, it would be not difficult to prove the consistency and the asymptotic normality of the MLE and the WLSE by establishing a theorem similar of the general result proved in Section~\ref{sec:MCE}.

\section{Application to an ethological problem}
\label{sec:bio-theory}

In~1990, J.-L.~Deneubourg et al. used a particular urn model to reproduce the sequences of
consecutive choices made by ants at a fork~\citedef{Deneubourg1990}. Let replace the urn filled with
balls of two colors by a fork with two branches. Drawing a ball and adding a ball of the same color
in the urn is equivalent to an ant choosing a branch and reinforcing it with pheromone by going
throw it. Then the probability to draw a red ball depending on the previous draws is equal to the
probability to choose the right branch depending on the previous passages.

We first introduce the model proposed by J.-L.~Deneubourg et al.~in the statistical framework
described in the previous section. Futher, we provide a description of the model behavior depending
on the parameter value. We also supply an ethological interpretation of the parameter. We then prove
that it is impossible to estimate the parameter on a single path. Finally we report the study of the
estimator accuracy that we perform on simulated data.

\subsection{Formalization and results}
\label{sec:theorie-interpretation}
\paragraph{Behavioral assumptions} We first introduce the hypotheses assumed on the ant behavior.
\begin{enumerate}
\item \label{hyp:same-reinf} Each ant regularly deposits a constant amount of pheromone as it walks.
\item \label{hyp:same-react} The ants are strictly identical which means that every ant has the
  same reaction to the same amount of pheromone.
\item \label{hyp:non-evap} Pheromone trails do not evaporate during the experiment.
\item \label{hyp:unique-pass} Each ant reaches the fork alone, chooses a branch and leaves the
  bifurcation by crossing only once into the chosen branch without passing through or reinforcing
  the non-chosen branch.
\end{enumerate}
Under these assumptions, the quantity of pheromone laid on each branch is proportional to the number
of passages through it. In path formation modeling, these assumptions are commonly made. However,
because of the inter-individual variability in ant behavior, the first two assumptions are
unrealistic. For instance, the pheromone perception noise implies that each ant could detect a
different signal from the same quantity of pheromone. These assumptions are an approximation of the
real ant behavior. The implicit hypothesis here is that the inter-individual variability is small
enough to consider that all ants are identical. For the third assumption, we suppose that the
persistence of the pheromone trails allows  to ignore the evaporation of the
pheromone. Experimental protocols are designed to make the four assumptions more acceptable by
choosing the ant species adequately and by placing them in an appropriate situation (see
Section~\ref{sec:ants}).

\paragraph{The model}
The random variable $X_k$, introduced previously, is the choice of the $k$-th ant going through the fork (1 for right and~0~for left). Consequently, for $k\geq1$, $Z_k$ is the number of passages through the right branch after~$k$~passages. For $\theta=(\alpha,c)\in(0,\infty)^2$ and all integers $0 \leq i \leq k$, we define choice function
\begin{align}
  \label{eq:f}
  f(\theta,i,k-i) = \frac{(c+i)^\alpha}{(c+i)^\alpha+ (c+k-i)^\alpha} \; .
\end{align}
We assume that the probability that an ant chooses the right branch at time~${k+1}$ given the first~$k$ choices is given by:
\begin{align}
\label{eq:choice-proba}
  \pr(X_{k+1}=1|\mathcal{F}_k) & = f(\theta,Z_k,k-Z_k) = \frac{(c+Z_k)^\alpha}{(c+Z_k)^\alpha+(c+k-Z_k)^\alpha} \; ,
\end{align}
where $c>0$ is the intrinsic attractiveness of each branch and $\alpha>0$ is the possible non-linearity of the choice. This process is an urn model which has been exhaustively investigated in the probabilistic literature. We recall here its main features.

\begin{theorem}
\label{theo:rrw-behavior}\
\begin{enumerate}[(i)]\setlength{\itemsep}{-1mm}
	\item \label{theo:rrw-behavior:a<1} If $\alpha<1$, then
		\begin{eqnarray*}
		\lim_{n\rightarrow\infty} \frac{Z_n}n = \frac{1}{2} \; a.s.
		\end{eqnarray*}
	\item \label{theo:rrw-behavior:a=1} If $\alpha=1$, then $Z_n/n$ converges almost surely to a random limit with a Beta$(c,c)$  distribution with density $x\to\Gamma^{-2}(c)\Gamma(2c) x^{c-1}(1-x)^{c-1}$ with respect to Lebesgue's measure on $[0,1]$, and $\Gamma$ is the Gamma function.
	\item \label{theo:rrw-behavior:a>1} If $\alpha>1$, then eventually only one branch will be chosen, i.e.
		\begin{eqnarray*}
		\exists x\in\{0,1\}\;,\ \exists n_0 \in \mathbb{N}\;,\ \forall n \ge n_0\;, \ X_n = x \;.
		\end{eqnarray*}
\end{enumerate}
\end{theorem}
The case $\alpha<1$ is due to \cite{Tarres2011}; the case $\alpha>1$ to \cite{Davis1990} and the
case $\alpha=1$ to \cite{Polya1931} (see \cite{Freedman1965} for an online access).

\paragraph{Ethological interpretation of the parameters and properties of the model}

The parameter~$\alpha$ characterizes the ant's differential sensitivity to the pheromone. When
$\alpha>1$, the ants can detect better and better increasing amounts of pheromones laid on each
branch and thus are more likely to choose the branch with the most pheromones. Moreover, after a
random but almost surely finite number of passages, one branch will eventually be selected, i.e. all
ants will afterwards choose this branch (see
Theorem~\ref{theo:rrw-behavior}-\eqref{theo:rrw-behavior:a>1}). In the opposite case, when
$\alpha<1$, the ants are less able to perceive the differences between the amounts of pheromones
laid on each branch as these amounts increase. This minimization effect is so strong that the
proportion of passages on each branch converges to~$1/2$ (see Theorem~\ref{theo:rrw-behavior}-\eqref{theo:rrw-behavior:a<1}). It is important to note that,
when~${\alpha\ne1}$, the asymptotic behavior of the proportion of passages through each branch
depends only on~$\alpha$ and not on~$c$. Furthermore the larger~$\alpha$~is or the closer~$\alpha$ is
to zero, the faster these effects will happen.

The role of~$c$ is clearer when~\eqref{eq:choice-proba} is rewritten as follows:
\begin{align*}
  \pr(X_{k+1}=1|\mathcal{F}_k) & = \frac{\left(1+{Z_k}/{c}\right)^\alpha}
  {\left(1+{Z_k}{c}\right)^\alpha + \left(1+({k-Z_k})/{c}\right)^\alpha} \; .
\end{align*}
The parameter~$c$ is the inverse of the reinforcement incrementation and thus can be interpreted as
the inverse of the attractiveness (or the strength) of the pheromones laid at each
passage. Consequently when $\alpha$ is neither very close to zero nor very large, $c$ has a strong
short term influence. When~$c$ is small compared to~$1$, the first passage strongly reinforces the
first chosen branch. Thus during the first few passages, a branch will be highly favored even
if~${\alpha<1}$ (in which case the branches will eventually be uniformly crossed). When~$c$~is
large, the first passages weakly reinforce the chosen branches. Then if~${\alpha>1}$ (in which case
a branch will eventually be selected), a large number of passages must be observed before the clear
emergence of a preference. Naturally, the larger~$c$ is or the closer~$c$ is to zero, the longer these
effects will be seen.

When $\alpha=1$, the asymptotic behavior of the passage proportion is determined by $c$ (see Theorem~\ref{theo:rrw-behavior}-\eqref{theo:rrw-behavior:a=1}). As~$c$
grows from zero to infinity, the limiting distribution of~$Z_n/n$ (as~${n\to\infty}$) evolves
continuously from two Dirac point masses at~0 and~1 to a single Dirac mass at~1/2. To illustrate
this point, we show in Figure \ref{beta_distribution} the density of the Beta distribution
for~$c=0.1$ and~$c=10$. We make some further comments.
\begin{itemize}
\item If $c<1$, a strong asymmetry in the choices of the branches appears. One branch is eventually
  chosen much more frequently than the other. Furthermore as~$c$ tends to~$0$, the Beta distribution
  tends to the distribution with two point masses at~$0$ and~$1$. This limit case corresponds to the
  situation in which a branch is selected, i.e.~$\alpha>1$.
\item If $c=1$, the limiting distribution is uniform on~$[0, 1]$.
\item If $c>1$, $Z_n/n$ appears to be much more concentrated around~$1/2$. This is similar to what
  is observed in the case~${\alpha<1}$.
\end{itemize}

\begin{figure}[h!]
\centering
\includegraphics[width=0.5\textwidth]{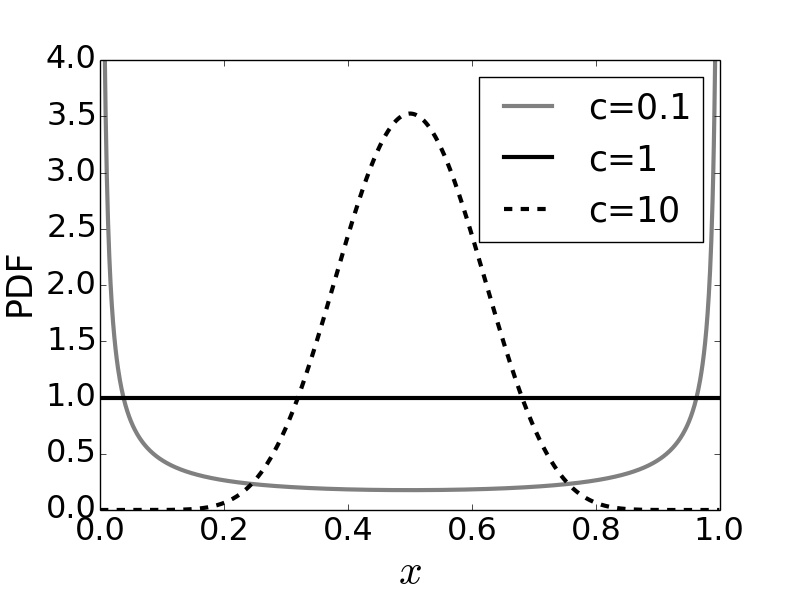}
\caption{Graph of the Beta distribution for parameters $(c,c)$ with $c=0.1$, $c=1$ and~${c=10}$}
\label{beta_distribution}
\end{figure}

To summarize, the model possesses four phases: fast and slow
selections and fast and slow uniformizations. These phases are delimited by two phase transitions: a
discontinuous one between~${\alpha>1}$ (branch selection) and~${\alpha<1}$ (branch uniformization)
with a critical state~${\alpha=1}$ and a smooth one between~${c<1}$ (strong pheromone deposits)
and~${c>1}$ (weak pheromone deposits). These properties are summarized in the phase diagram in
Figure~\ref{fig:phase_diag}.  When~$c$~is small, it is very likely that one branch will be favored
during the first passages, thus the empirical distribution of the choices resembles the Beta
distribution with a small~$c$ (the grey solid line in Figure~\ref{beta_distribution}). As the number
of experiments increases, the shape of the empirical distribution will be closer and closer to its
limit: a Dirac mass at~$1/2$ if~$\alpha<1$ and two Dirac masses at~0 and~1 if~${\alpha>1}$. When~$c$~is large, the earlier passages do not show any preference between the branch. Again, when the number
of experiments increases, the asymptotic behavior is progressively revealed.

To date, the most commonly used behavioral state in the model is the slow selection of a branch
\citedef{Deneubourg1990,Beckers1992a}. But at least two other states are interesting. The fast
uniformization can describe the case where none of the branches are preferred. The slow
uniformization could reproduce the saturation phenomenon. There exists a threshold concentration of
pheromone upon which ants can no longer detect the pheromone concentration variations
\citedef{Pasteels1987}. In experiments involving many ants, one can first observe the favorization of a
branch. But when this branch is saturated (its attractiveness stops increasing), ants go more and
more through the other branch, whose attractiveness still increases. Eventually, the two branches are
uniformly chosen.
\clearpage
\begin{remark}
  \label{rem:identi-pb}
  In the context of an estimation procedure, the similar effects of~$\alpha$ and~$c$ (favorization/selection of a branch or not) induce an identifiability issue. Indeed, we observe a
  finite number of choices and consequently we only see the short term behavior.  We have seen that
  the favorization of a branch in the first passages could be due to a pair of parameter values
  $(\alpha,c)$ with $c$ small compared to~$1$ and~$\alpha$ close to~$1$ or to a pair $(\alpha,c)$
  with $c$ close to~$1$ and $\alpha$ large compared to~$1$. On the other hand, if the first passages
  are nearly uniform on the two branches, it could be the result of $c$ close to~$1$ and $\alpha$
  small, or of $c$ large and~$\alpha$ close to~$1$.
\end{remark}
Thus, we can expect that the estimation of the parameters will be difficult when both parameters
contribute to the same effect, e.g.~$\alpha$ large and~$c$ small (fast selection of one branch)
or~$\alpha$ small and~$c$ large (no selection); and also when the parameters have competing effects:
very small~$c$ and~${\alpha<1}$, or very large~$c$ and~${\alpha>1}$. The statistical procedure that
we introduce in this paper partially circumvents this difficulty, since it focuses
on the transition probabilities rather than on the general shape of a curve, which is what
calibration methods do. This will be illustrated in Section~\ref{sec:simulation}.
\begin{figure}[h!]
\center
\includegraphics[width=0.9\textwidth]{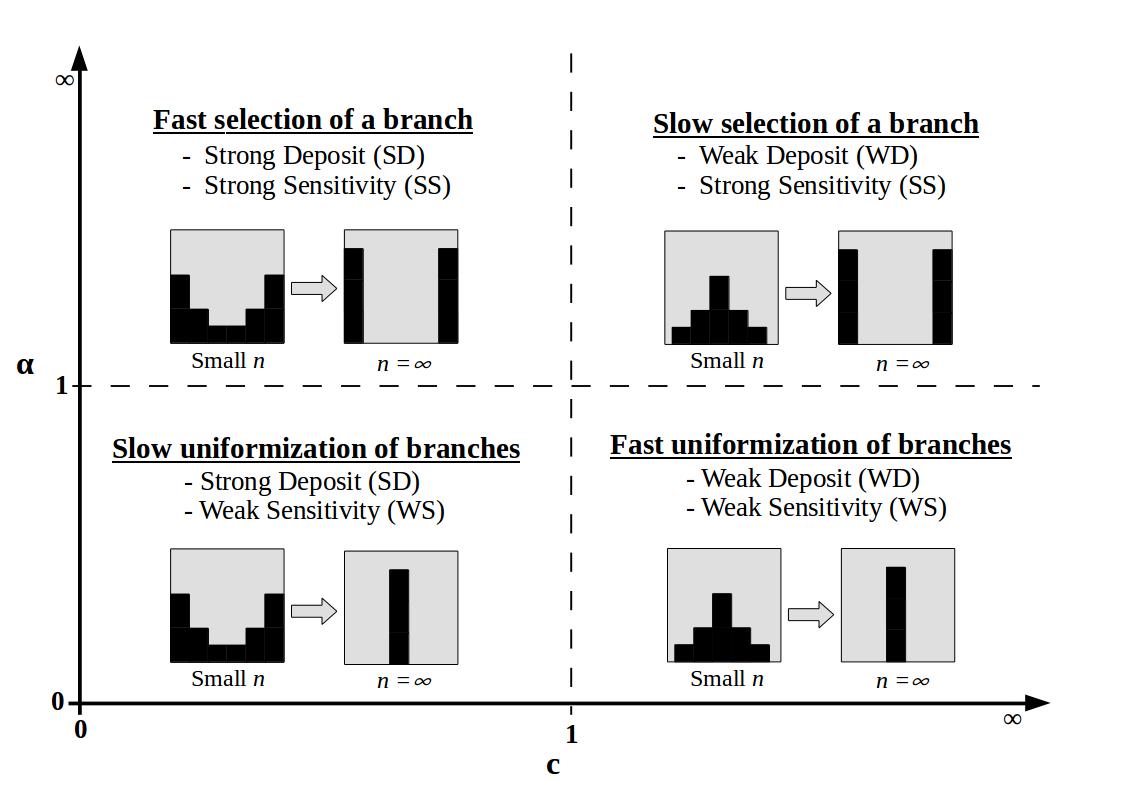}
\caption{Phase diagram of the model. The graphs in the shaded boxes show the shape of the empirical
  distribution of $Z_n/n$ for small $n$ (left) and its limiting distribution (as~${n\to\infty}$,
  right).}
\label{fig:phase_diag}
\end{figure}

\subsection{Parameter estimation}
\label{sec:parametric}

We start with the maximum likelihood estimator, that is $\hat\theta_N=(\hat\alpha_N,\hat{c}_N)$ defined by~(\ref{eq:def-mle}).  The
Fisher information matrix has the following expression.
\begin{align}
 \label{eq:fisher-rrw}
  \fisher(\theta)=\sum_{k=0}^{n-1} \sum_{i=0}^k \pr(Z_k = i ) f(\alpha,c,i,k-i)\bar{f}(\alpha,c,i,k-i)  \mcj(\alpha,c,i,k-i) \;,
\end{align}
with $\theta=(\alpha,c)$ and for $0\le i\le k\le n-1$,
\begin{align*}
  \mcj(\alpha,c,i,k-i)=\begin{pmatrix}
    \log^2\left(\frac{c+i}{c+k-i}\right) & \log\left(\frac{c+i}{c+k-i}\right)  \frac{\alpha (k-2i)}{(c+i)(c+k-i)}  \\
    \log\left(\frac{c+i}{c+k-i}\right) \frac{\alpha
      (k-2i)}{(c+i)(c+k-i)}
    & \frac{\alpha^2 (k-2i)^2}{(c+i)^2(c+k-i)^2} \\
  \end{pmatrix} \; .
\end{align*}
It is important to note that the Fisher information matrix is not diagonal. Thus the estimation of each parameter has an effect on the estimation of the other.

\begin{corollary}
\label{theo:mle-rrw} Let $\Theta$ be a compact subset of $(0,\infty)^2$ which contains $(\alpha_0,c_0)$.  Then the maximum likelihood estimator $\hat\theta_N$ is consistent and asymptotically normal and efficient, i.e.~$\sqrt{N}(\omle-\theta_0)$ converges weakly to $\mathcal{N}(0,\fisher^{-1}(\theta_0))$.
\end{corollary}

As mentioned above, we also use weighted least squared estimators defined by (\ref{eq:def-wlse}) for several different weight sequences $w_N$, such that $w_N(i,k-i)$ converges almost surely to~$w_0(i,k-i)>0$, for all $0\le k\le n-1$ and $0\le i\le k$.

\begin{corollary}
\label{theo:wlse-rrw}
Let $\Theta$ be a compact subset of~$(0,\infty)^2$ which contains $(\alpha_0,c_0)$ and assume that~$w_N$ converges almost surely to positive weights~$w_0$. Then the weighted least squared estimator $\wlsemodtheta$ is consistent and asymptotically normal, i.e. ${\sqrt{N}(\wlsemodtheta-\theta_0)}$ converges weakly to $\mathcal{N}(0,\Sigma_n(\theta_0))$, where $\Sigma_n(\theta_0)$ is a positive definite covariance matrix. It is efficient, if~${w_N(i,k-i)=p_N^{-1}q_N^{-1} a_N(i,k-i)}$, for all $0\le k\le n-1$ and $0\le i\le k$.
\end{corollary}

The proofs of these corollaries are in Section~\ref{sec:proof-rrw}. They are an application of Theorems~\ref{theo:consistence-clt-emv} and~\ref{theo:wlse}. Since Assumption~\ref{hypo:modele-regulier}-\eqref{item:regularite} obviously holds, it remains only to check \eqref{item:identifiability} and \eqref{item:invertibilite} of Assumption~\ref{hypo:modele-regulier}.

\subsection{Estimation on a single path}
\label{sec:fisher-single-path}

The main feature of the binary choice model for $\alpha_0>1$ is that only one branch will be crossed
eventually. It seems clear then that a statistical procedure based on only one path (one sequence of
choices) cannot be consistent, since no new information will be obtained after one branch is
eventually abandoned. This intuition is true and more surprisingly, it is also true in the case
$\alpha_0=1$. This is translated in statistical terms in the following theorem. Let $\ell_n$ denote
the log-likelihood based on a single path of length $n$ and $\dot\ell_n$ its gradient. The model is
regular, so the Fisher information is $\var_\theta(\dot\ell_n(\theta))$.
\begin{theorem} 
\label{theo:fisher-one-path}\
	\begin{enumerate}[(i)]
	\item \label{item:fisher-alpha=1} If $\alpha_0=1$ and $c_0>0$, then $\lim_{n\to\infty} \fisher(c_0)<\infty$. 
	\item \label{item:likelihood-alpha<1} If $\alpha_0<1$, $n^{-1}\ell_n(\theta_0)\to -\log2$.
	\item \label{item:likelihood-alpha>1} If $\alpha_0>1$, then $\ell_n(\theta_0)$ converges almost surely to a random variable as~${n\to\infty}$.
	\end{enumerate}
\end{theorem}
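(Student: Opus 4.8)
The plan is to handle the three regimes separately, in each case feeding the pathwise asymptotics of $Z_n$ from Theorem~\ref{theo:rrw-behavior} into the per-step decomposition of the single-path log-likelihood. Write $f_k=f(\theta,Z_k,k-Z_k)$ and $\bar f_k=1-f_k$, so that $\ell_n(\theta)=\sum_{k=0}^{n-1}\{X_{k+1}\log f_k+(1-X_{k+1})\log\bar f_k\}$, each summand being equal to either $\log f_k$ or $\log\bar f_k$.

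Parts \eqref{item:likelihood-alpha<1} and \eqref{item:likelihood-alpha>1} are short. For \eqref{item:likelihood-alpha<1}, $\alpha<1$ gives $Z_k/k\to 1/2$ a.s.\ by Theorem~\ref{theo:rrw-behavior}, hence $(c+Z_k)/(c+k-Z_k)\to 1$ and therefore $\log f_k\to-\log2$ and $\log\bar f_k\to-\log2$ a.s.; each summand converges a.s.\ to $-\log2$, and the Ces\`aro lemma yields $n^{-1}\ell_n(\theta)\to-\log2$. For \eqref{item:likelihood-alpha>1}, $\alpha>1$ gives by Theorem~\ref{theo:rrw-behavior} a finite random time after which a single colour, say red, is always drawn, the discarded count stabilising at its final value $\total_\infty$; for $k$ beyond that time $X_{k+1}=1$ and $f_k=(c+k-\total_\infty)^\alpha/\{(c+k-\total_\infty)^\alpha+(c+\total_\infty)^\alpha\}$, so $\log f_k=-\log(1+(c+\total_\infty)^\alpha/(c+k-\total_\infty)^\alpha)$ is of order $-k^{-\alpha}$. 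Since $\alpha>1$ the tail series $\sum_k\log f_k$ converges absolutely, and the finitely many pre-selection terms are finite, so $\ell_n(\theta)$ converges a.s.\ to a finite random variable.

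The substance is part \eqref{item:fisher-alpha=1}, where $\alpha=1$, the chain is the symmetric P\'olya urn and $f(c,i,k-i)=(c+i)/(2c+k)$. I would use the additivity of Fisher information along the chain (equivalently, specialising \eqref{eq:fisher-rrw} to $\alpha=1$), $\fisher(c)=\sum_{k=0}^{n-1}\esp[\iota_k]$, where $\iota_k=(\partial_cf_k)^2/(f_k\bar f_k)$ is the conditional per-step information. A direct computation gives $\partial_cf=(k-2i)/(2c+k)^2$ and $f\bar f=(c+i)(c+k-i)/(2c+k)^2$, so $\iota_k=(k-2Z_k)^2/\{(2c+k)^2(c+Z_k)(c+k-Z_k)\}$. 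Using the identity $(k-2i)^2/\{(c+i)(c+k-i)\}=(c+k-i)/(c+i)+(c+i)/(c+k-i)-2$ together with the symmetry $Z_k\overset{d}{=}k-Z_k$ of the symmetric urn, this reduces to
\begin{align*}
\esp[\iota_k]=\frac{2}{2c+k}\,\esp\Big[\frac{1}{c+Z_k}\Big]-\frac{4}{(2c+k)^2}\;.
\end{align*}
It then suffices to prove $\sum_{k\ge0}\esp[\iota_k]<\infty$, which amounts to controlling the single quantity $\esp[(c+Z_k)^{-1}]$.

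This estimate is the main obstacle, because $Z_k/k\to U$ with $U\sim\mathrm{Beta}(c,c)$ and $\esp[1/U]=\infty$ as soon as $c\le1$, so one cannot merely pass to the limit. I would compute $\esp[(c+Z_k)^{-1}]=\sum_{i=0}^k(c+i)^{-1}\pr(Z_k=i)$ from the Beta--Binomial law $\pr(Z_k=i)=\binom{k}{i}B(c+i,c+k-i)/B(c,c)$, splitting the sum into a boundary part (bounded $i$, where $\Gamma$-function asymptotics give $\pr(Z_k=i)\sim C_ik^{-c}$, together with a uniform bound $\pr(Z_k=i)\le C(i+1)^{c-1}k^{-c}$) and a bulk part (where $i/k$ stays away from $0$). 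The case analysis then gives $\esp[(c+Z_k)^{-1}]=O(k^{-1})$ for $c>1$ (bulk dominant, $\esp[1/U]<\infty$), $=H_{k+1}/(k+1)=O(k^{-1}\log k)$ for $c=1$ (where $Z_k$ is uniform on $\{0,\dots,k\}$), and $=O(k^{-c})$ for $c<1$ (boundary dominant). Hence $\esp[\iota_k]=O(k^{-2}\log k)$ for $c\ge1$ and $O(k^{-1-c})$ for $c<1$, both summable for every $c>0$, giving $\lim_n\fisher(c)=\sum_{k\ge0}\esp[\iota_k]<\infty$. A cleaner but less elementary alternative uses exchangeability: $Z_n$ is sufficient, $\ell_n(c)=\log\Gamma(c+Z_n)+\log\Gamma(c+n-Z_n)+\log\Gamma(2c)-2\log\Gamma(c)-\log\Gamma(2c+n)$, so $\fisher(c)=\var(\psi(c+Z_n)+\psi(c+n-Z_n))$ with $\psi=\Gamma'/\Gamma$, and since $\psi(x)=\log x+O(1/x)$ this converges to $\var(\log\{U(1-U)\})<\infty$; there the obstacle becomes the uniform integrability of the centred score, again settled through the boundary probabilities $\pr(Z_n=i)$ and $\esp[\log^2U]<\infty$.
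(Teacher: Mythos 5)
Your proposal is correct and takes essentially the same route as the paper: parts (ii) and (iii) are argued identically (Ces\`aro after $Z_n/n\to 1/2$, and convergence of the post-selection tail series $\log f_k\sim -(c+\total_\infty)^\alpha/(c+k-\total_\infty)^\alpha$ for $\alpha>1$), and for part (i) your reduction of $\esp[\iota_k]$ to $\frac{2}{2c+k}\esp\bigl[(c+Z_k)^{-1}\bigr]-\frac{4}{(2c+k)^2}$ is exactly the paper's formula~\eqref{eq:fisher-alpha=1}, with the same three-case bounds $O(k^{-1})$, $O(k^{-1}\log k)$, $O(k^{-c})$ on $\esp[(c+Z_k)^{-1}]$ derived from the same P\'olya/Beta--Binomial asymptotics. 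No gaps; the digamma/exchangeability variant you sketch at the end is a genuine alternative but is only an aside.
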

The proof is in Section~\ref{sec:proof-theo-one-path}. Statement~\eqref{item:fisher-alpha=1} means that, when $\alpha_0=1$, the Fisher information is bounded. This implies that the parameter $c_0$ cannot be estimated on a single path. This also implies that the length $n$ of each path should be taken as large as possible (theoretically infinite) in order to minimize the asymptotic variance of the estimators. Statements~\eqref{item:likelihood-alpha<1} and~\eqref{item:likelihood-alpha>1} imply that the maximum likelihood estimator is inconsistent, since the likelihood does not tend to a constant.

\subsection[Simulations]{Simulation experiment}
\label{sec:simulation}

In order to assess the quality of the estimators proposed, we have made a short simulation study. For several pairs $(\alpha,c)$, we have simulated 1000 experiments of $N=50$ paths of length $n=100$ (recall that $n$ is the number of ants going through the bifurcation). These are reasonable values in view of the practical experiments with actual ants. We compare the  performance of the maximum likelihood estimator $\omle$ (MLE) defined in~(\ref{eq:def-mle}) and of the weighted least squares estimator $\wlsemodtheta$ (WLSE) defined in~(\ref{eq:def-wlse}) with the weights~${w_N(i,k-i)=a_N(i,k-i)}$ defined in~(\ref{eq:empiriques}). The asymptotically efficient WLSE with the weights $w_N(i,k-i)=a_N(i,k-i) p_N(i,k-i)^{-1}q_N(i,k-i)^{-1}$ provides a severely biased estimation of $\alpha$ and always estimates a very small value of~$c$ with a very small dispersion. This is caused by the fact that the empirical $p_N$ and $q_N$ vanish frequently, so that the weights are infinite. We will not report the study for this estimator.

\paragraph{The theoretical standard deviation}

We first evaluate numerically some values of the theoretical standard deviations of both estimators for several values of~$\alpha$ and~$c$.  We have chosen arbitrary values of $\alpha$ and $c$ in the range~${0.5,2}$. We have also chosen values of $\alpha$ and $c$ which correspond to those found in the literature cited and to those that we have estimated in the real life experiment described in Section~\ref{sec:ants}. These results are reported in Table~\ref{tab:std-mse} and in Figure~\ref{fig:a_std} and their features are summarized in the following points.

\begin{itemize}
\item As theoretically expected, the asymptotic variance of the MLE, which is the Fisher information bound, is smaller than the variance of the WLSE, but the ratio between the variances of the two estimators is never less than one fourth. Moreover, their overall behavior is similar.
\item The variance of the estimators of $\alpha$ is smaller when both parameters do not contribute to the same effect. The worst variance is for $\alpha$ large and~$c$~small, that is when the values of both parameters imply fast selection of a branch. The variance tend to infinity when~$\alpha$ tends to infinity.
\item The variance of the estimators of $c$ increases with $c$ and tends to infinity when $\alpha$ tends to~0 and to~$\infty$.
\item These effects are explained by the fact that the coefficients of the Fisher information matrix tend to zero when $\alpha$ tends to zero, except the coefficient corresponding to $\alpha$. See Formula~(\ref{eq:fisher-rrw}).
\end{itemize}

\begin{figure}[h!]
\begin{minipage}[c]{0.5\textwidth}
	\centering
	\subfigure[TSD of $\alpha$]{\includegraphics[width=\textwidth]{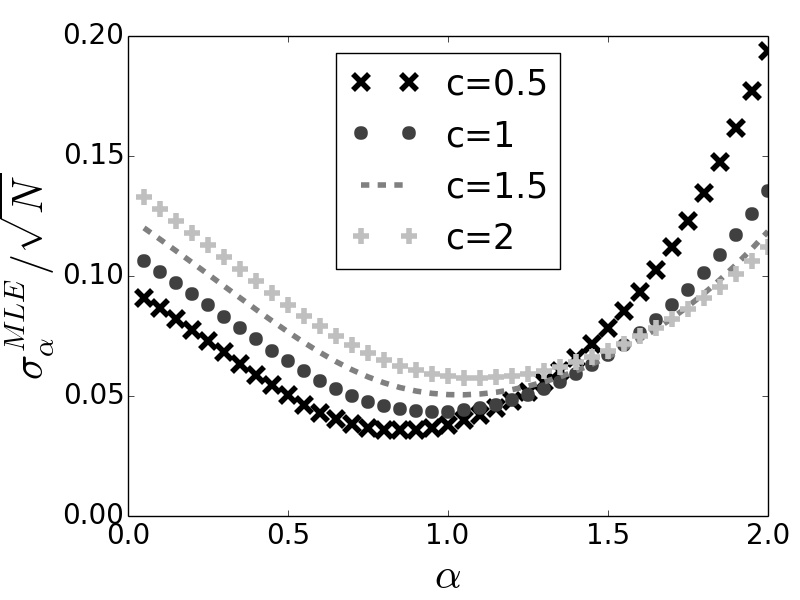}}
\end{minipage} \hfill
\begin{minipage}[c]{0.5\textwidth} 
	\centering
	\subfigure[TSD of $c$]{\includegraphics[width=\textwidth]{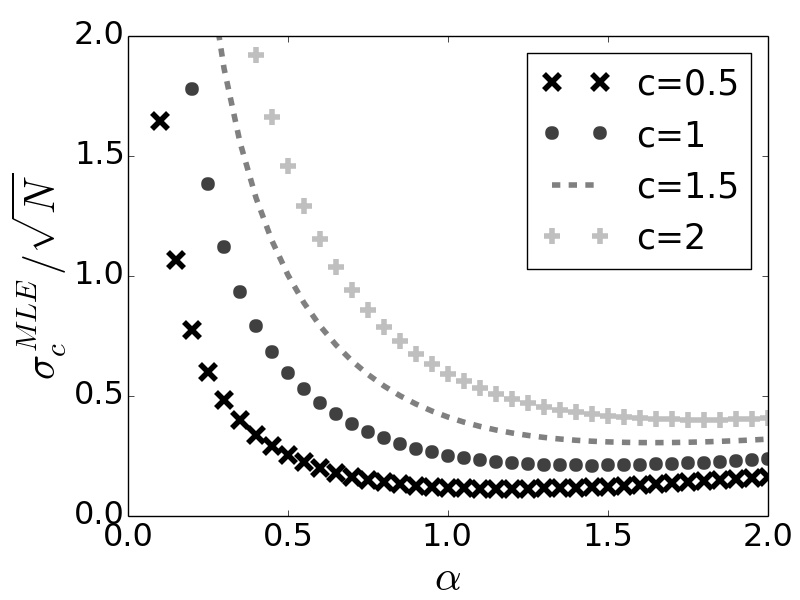}}
\end{minipage}
\caption{\label{fig:a_std} Theoretical standard deviation for $N=50$ paths of length~${n=100}$ of the MLE of $\alpha$ (a) and of $c$ (b), for $\alpha$ in $(0,2]$ and fixed values of $c$. Theoretical standard deviation of the WLSE has the same shape, but with higher convergence speed as $\alpha$ tend to $0$ or $\infty$.}
\end{figure}

\paragraph{Performance of the estimators} Recall that we have simulated 1000 experiments, each of
$N=50$ paths of length $n=100$. Because of the length of the computations, each MLE was computed
only 500 times. Table~\ref{tab:std-mse} reports root mean squared error (MSE) of both estimators
based on the simulated data for the same values of the parameters and their features are summarized
in the following points.

\begin{itemize}
\item For most values of the parameters, the MSE are close to the theoretical standard deviation.
\item The MSE increase significantly when $c$ is large or when both parameters contribute to the same effect.
\item This increase is more noticeable for the WLSE than for the MLE.
\item This increase is in part due to the skewness of these estimators. For some values of the parameters, both estimators tend to overestimate the parameters.
\item For the MLE, the MSE is much larger in the case of non selection than in the case of selection where the empirical performance of the MLE nearly matches the theoretical value.
\item These effects are always stronger for the estimation of $c$ than for the estimation of $\alpha$.
\end{itemize}

This degraded performance for some specific or extreme values of the parameters is in part due to numerical issues.
\begin{itemize}
\item In the case where selection of a branch is fast, many of the empirical weights used to compute the WLSE vanish, and the least squares method uses very few points to fit the curve. The MLE is not affected by this problem.
\item In the case where both parameters concur to non selection, the probability of choosing one branch converges very fast to 1/2, and thus the experiment brings very little information. This affects both the MLE and the WLSE, and in addition, many of the empirical weights vanish so the WLSE is even less efficient.
\end{itemize}

The degraded performance may also be caused by  the identifiability problem explained in Remark~\ref{rem:identi-pb}, i.e. the similar effects of the two parameters makes the estimation more difficult.

\begin{table}[h!]
\centering
\caption{\label{tab:std-mse} Theoretical standard deviations (TSD) for~${N=50}$ paths of length~${n=100}$ and square root of the mean squared errors (MSE) for~${500}$ (for the MLE) or~${1000}$ (for the WLSE) simulated experiments of~${N=50}$ paths of length~${n=100}$. All figures of this table must be mutiply by~${0.01}$.}\small
\begin{tabular}{|c||c:c|c:c||c:c|c:c|}
\hline
\multirow{3}{*}{($\alpha,c$)}	&\multicolumn{4}{c||}{$\alpha$}&	\multicolumn{4}{c|}{$c$}\\
\cline{2-9}	&\multicolumn{2}{c|}{MLE}& \multicolumn{2}{c||}{WLSE}& \multicolumn{2}{c|}{MLE}&\multicolumn{2}{c|}{WLSE}\\
\cline{2-9}	&TSD	&$\sqrt{MSE}$	&TSD		&$\sqrt{MSE}$	&TSD		&$\sqrt{MSE}$	&TSD		&$\sqrt{MSE}$\\\hline
$(0.5,0.5)$	&$5.02$	&$5.60$	&$6.25$	&$7.60$	&$25.4$ 	&$54.1$	&$31.7$	&$58.8$	\\
$(0.5,1.0)$	&$6.45$	&$7.39$	&$8.56$	&$10.2$	&$59.8$	&$137$	&$79.4$	&$169$	\\
$(0.5,2.0)$	&$8.80$	&$16.9$	&$12.9$	&$356$	&$146$	&$651$	&$214$	&$29500$	\\
$(1.0,0.5)$	&$3.81$	&$3.90$	&$4.97$	&$6.25$	&$11.8$	&$12.4$	&$15.9$	&$21.0$	\\
$(1.0,1.0)$	&$4.34$	&$4.98$	&$5.91$	&$7.34$	&$25.2$	&$31.3$	&$34.9$	&$47.3$	\\
$(1.0,2.0)$	&$5.83$	&$5.80$	&$8.72$	&$9.38$	&$59.3$	&$64.9$	&$89.1$	&$111$	\\
$(1.5,0.5)$	&$7.83$	&$7.85$	&$12.8$	&$19.4$	&$12.3$	&$13.0$	&$18.7$	&$24.6$	\\
$(1.5,1.0)$	&$6.69$	&$6.59$	&$10.8$	&$15.1$	&$21.0$	&$21.4$	&$33.1$	&$40.3$	\\
$(1.5,2.0)$	&$6.88$	&$7.24$	&$11.5$	&$13.9$	&$41.7$	&$45.9$	&$68.8$	&$78.4$	\\
$(2.0,0.5)$	&$19.4$	&$26.1$	&$41.8$	&$2690$	&$16.4$	&$19.8$	&$28.6$	&$1410$ 	\\
$(2.0,1.0)$	&$13.5$	&$14.2$	&$28.7$	&$985$	&$23.9$	&$26.0$	&$44.3$	&$988$	\\
$(2.0,2.0)$	&$11.2$	&$11.7$	&$23.1$	&$35.4$	&$40.7$	&$43.4$	&$77.2$	&$101$	\\
\hdashline
$(2.0,20.0)$	&$35.5$	&$58.8$	&$129$	&$124$	&$794$	&$1420$	&$2880$	&$3240$	\\
$(2.6,60.0)$	&$166$	&$864$	&$1230$	&$3590$	&$5780$	&$32500$	&$43200$	&$142000$	\\
$(1.1,3.0)$	&$7.20$	&$7.80$	&$11.9$	&$11.3$	&$89.9$	&$111$	&$148$	&$201$	\\
$(1.1,7.0)$	&$13.8$	&$17.4$	&$29.5$	&$28.3$	&$297$	&$433$	&$633$	&$758$	\\\hline
\multicolumn{9}{|c|}{\bf{All figures must be multiply by $0.01$}}\\\hline
\end{tabular}
\end{table}
\paragraph{Bootstrap confidence intervals}
Since the asymptotic variance depends on the unknown parameters, we have computed the pivotal
Bootstrap 95\% confidence intervals for the parameters based on one simulation of $N=50$ paths of
length $n=100$ and a Bootstrap sample size of $500$ (see \cite{Wasserman2004}, Section~8.3, for
details on this method). We have compared these Bootstrap intervals with the corresponding
Monte-Carlo intervals, based on 500 simulations (see Table~\ref{tab:boot_IDC}). The match is nearly perfect for the MLE for
$\alpha$, but as before, the performance is poorer for the estimation of $c$. The intervals for $c$
are noticeably skewed to the right but always contain the true value. For further comparison, we
only show here the results corresponding to the values of the parameters estimated in the real life
experiment reported below and those corresponding to values found in the earlier literature.

\begin{table}[h!]
\centering
\caption{\label{tab:boot_IDC} Monte-Carlo 95\% confidence intervals for $500$ simulated experiment of $N=50$ paths of length~${n=100}$ and Bootstrap 95\% confidence intervals for one simulated experiment of $50$ paths of length~$100$}
\begin{tabular}{|c|c|c|c|c|c|c|c|c|c|} \hline
\multirow{2}{*}{}	&\multirow{2}{*}{($\alpha,c$)}	&\multicolumn{2}{c|}{IDC $95\%$ for $\alpha$ }	&\multicolumn{2}{c|}{IDC $95\%$ for $c$} \\ 
\cline{3-6}	&	&Monte-Carlo		&Bootstrap	&Monte-Carlo	&Bootstrap   \\\hline
MLE	&$(2.0,20)$	&$(1.50,3.38)$	&$(1.90,5.10)$	&$(9.90,55.5)$	&$(19.1,94.4)$	\\
	&$(2.6,60)$	&$(1.23,29.3)$	&$(1.12,38.7)$	&$(15.9,1053)$	&$(19.9,1637)$	\\
    &$(1.1,3.0)$	&$(0.98,1.29)$	&$(1.01,1.23)$	&$(1.81,6.22)$	&$(1.34,3.99)$	\\
\hdashline
WLSE	&$(2.0,20)$	&$(1.26,4.85)$	&$(1.06,3.99)$	&$(5.92,88.1)$	&$(2.76,69.3)$	\\
    &$(2.6,60)$	&$(0.74,88.3)$	&$(0.24,87.7)$	&$(3.86,3754)$	&$(0.17,4132)$	\\
    &$(1.1,7.0)$	&$(0.75,1.83)$	&$(0.55,2.56)$	&$(1.68,30.3)$	&$(1.48,66.0)$	\\\hline
\end{tabular}
\end{table}

\clearpage
\section[Ants]{Real life experiment with ants}
\label{sec:ants}

In this section, we apply the previous estimators on data from  a path selection
experiment by a colony of ants.

\subsection{Experiment description} 
\label{sec:exp_desc}
This experiment was done in the Research Center on Animal Cognition (UMR~5169) of Paul Sabatier
University Toulouse under the supervision of Guy Theraulaz, Hugues Chat\'e and the first author. A
small laboratory colony (approximately 200~workers) of Argentine ants {\it Linepithema humile} was
starved for two days before the experiment. During the experiment, the colony had access to a fork
carved in a white PVC slab, partially covered by a Plexiglas plate (see
Figure~\ref{fig:setup_Y}). The angle between the branches was~$60^\circ$. The fork galleries had a
$0.5$~cm square section. The entrance of the maze was controlled by a door. Food was never present
during the experiment. The maze was initially free of any pheromone trail.

\begin{figure}[h!]
  \centering
  \includegraphics[width=0.8\textwidth]{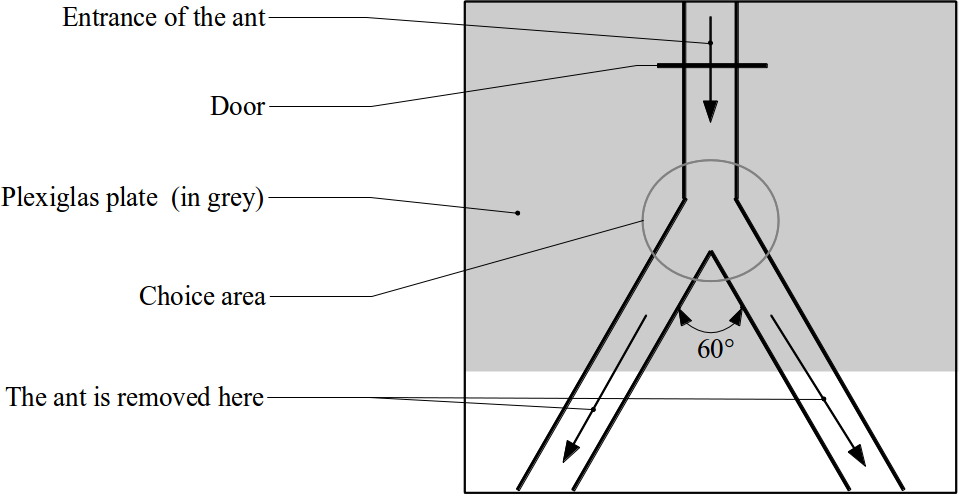}
  \caption{The experimental set-up: a fork carved in a white PVC slab, partially covered by a Plexiglass plate.}
  \label{fig:setup_Y}
\end{figure}

Each trial ($N=50$) consisted in introducing separately each ant to the entrance of the fork
(see Figure~\ref{fig:setup_Y}) one at a time. Once inside, an ant must choose between the left or the
right branch of the fork. As soon as the ant had made a choice and stepped into one branch, it was
removed from the set-up and another ant was introduced. All the choices were recorded and a trial
ended when 100~ants had passed through the fork.

This experimental protocol was designed to strengthen the behavioral assumptions described in
Section~\ref{sec:theorie-interpretation}. Any return to the fork is forbidden so that we can
consider that each ant passed only one time. There was never more than one ant in the set-up. This
implies that each ant in the maze received no other cue about the previous passages than the
pheromone that was been laid. The species {\it Linepithema humile} was in part chosen to justify the
assumption of identical pheromone deposits. Indeed, these ant may deposit regularly the same type of
pheromone on their trajectory \citedef{VanVorhisKey1982,Aron1989}. All ants were prepared the same way
before the experiments to increase the credibility of the assumption stating that each ant behaved
by the same way. The length of the experiments was limited to stay close to the half-life duration
of the pheromone trails \citedef{Jeanson2003}.

\subsection{Data representation}
\label{sec:data_repres} 
Figures \ref{fig:traj_exp} shows the 50 paths of length $n=100$, that is, $50$ choice sequences of
100 ants that went through the bifurcation. The paths are represented as random walks with
increment~$+1$ when the right branch is chosen, and~$-1$ when the left one is chosen.  In less than
ten experiments, a branch seemed to be selected, whereas in the others, selection of a branch was
not obvious. Figure \ref{fig:hist_exp} shows the histogram of the distribution of $Z_{100}/100$,
that is the final proportion of the choices of the right branch. There is no clear visual evidence
that $\alpha>1$ as it is claim in the literature \citedef{Deneubourg1990,Vittori2006,Garnier2009}.

\begin{figure}[h!]
\begin{minipage}[c]{0.48\textwidth}
\subfigure[\label{fig:traj_exp} The 50 paths of $n=100$ ants choosing either left~(+1) or right~(-1)]{\includegraphics[width=\textwidth,clip]{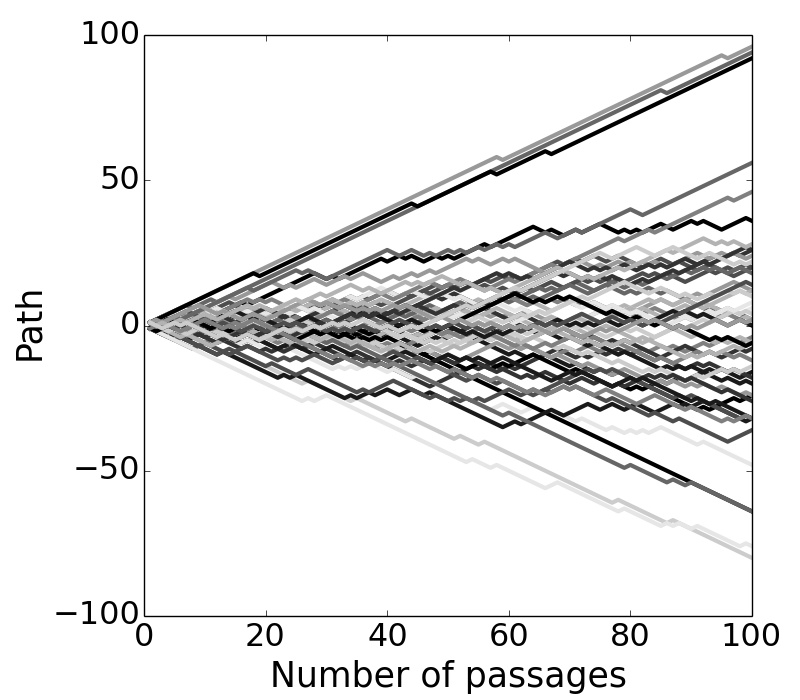}}
\end{minipage} \hfill
\begin{minipage}[c]{0.48\textwidth}
\subfigure[\label{fig:hist_exp} Histogram of the final proportion of right passages ($Z_{100}/100$)]{\includegraphics[width=\textwidth,clip]{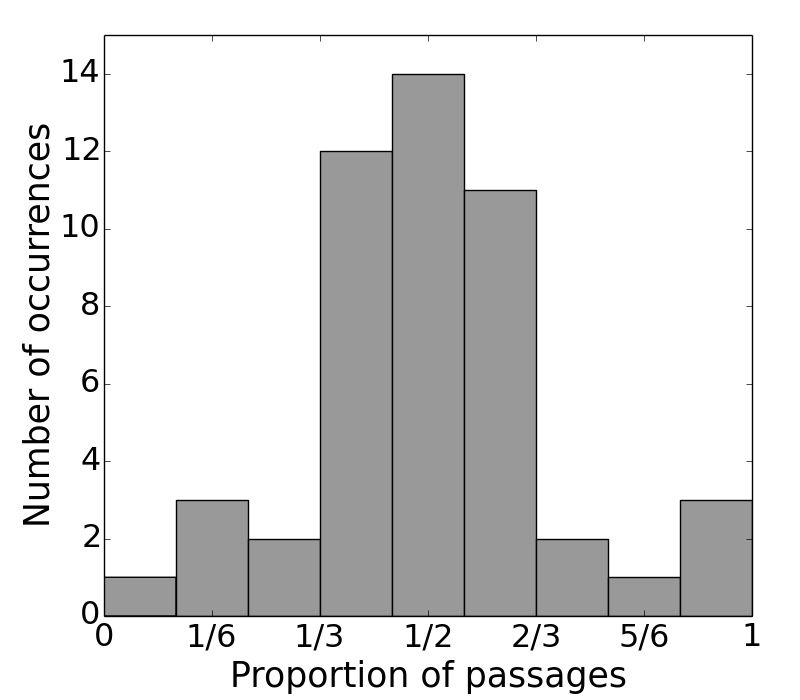}}
\end{minipage}
\caption{Data representation}
\end{figure}

\subsection{Parameter estimation}
\label{sec:para_est}
Several values of these parameters have been proposed in the applied
literature. \cite{Deneubourg1990} proposed $\alpha=2$, $c=20$ and more recently \cite{Garnier2009}
suggested $\alpha=2.6$ and $c=60$. It must be noted however that these values are not obtained by a
statistical method but by the calibration of a curve to a plot.  Therefore, these methods do not
lead to confidence intervals. Moreover, a calibration method has an inherent risk of over fitting,
because of the identifiability problem explained in Remark~\ref{rem:identi-pb}. As illustrated in
Figure~\ref{fig:phase_diag}, if for instance~$\alpha$ and~$c$ are both small, then both branches
will be asymptotically equally chosen, but paths of finite length $n$ might be misleading and the
calibration will suggest values of~$\alpha$ and~$c$ corresponding to the selection of a branch. The
statistical procedure is based on the dynamics of the process and is thus less prone to this type of
error. Nevertheless, we will see that our results do not contradict those of \cite{Deneubourg1990}
and \cite{Garnier2009}, but complement them.

\begin{table}[h!]
\centering
\caption{\label{tab:exp} The MLE and the WLSE for the 50 paths of real ants and their Boostrap~$95\%$ confidence intervals}
\begin{tabular}{|c|c|c|c|c|c|c|c|c|c|} \hline
	& $\hat\alpha$	&Bootstrap 95\% CI & $\hat c$	&Bootstrap 95\% CI 	\\\hline
MLE	&$1.07$	&$(0.80,1.99)$	&$3.26$	&$(1.14,23.0)$	\\\hline
WLSE	&$1.10$	&$(0.62,3.81)$	&$6.91$	&$(0.94,85.4)$	\\\hline
\end{tabular}
\end{table}

Table \ref{tab:exp} shows the results of the maximum likelihood estimation and the weighted least
squares estimation. Both estimates of $\alpha$ are close to 1.1 and the estimates of $c$ are
between~3 and~7. The $95\%$ confidence intervals are slightly larger than the simulated ones (see
Table~\ref{tab:boot_IDC}). This increased variability may be due to the extreme paths which seem to
show a very fast selection of one branch (see Figures \ref{fig:traj_exp} and
\ref{fig:hist_exp}). This may suggest that the ants did not have the same behavior and that the
distribution of $Z_{100}/100$ could be a mixture of two distributions.

For both methods, the 95\% Bootstrap confidence intervals of $\alpha$ contain the value~1. More
precisely, as shown in Figure~\ref{fig:ellipse}, approximately 1/3 of the bootstrap parameters gives
weak pheromone deposits ($c>1$) and a weak differential sensitivity ($\alpha<1$), which means that
branches are eventually uniformly crossed. In almost all the others cases, we conclude for weak
pheromone deposits ($c>1$) and a strong differential sensitivity ($\alpha>1$), which means that a
branch will be eventually, though slowly, selected. In only a few cases do the estimators give
strong pheromone deposits ($c<1$), but a weak differential sensitivity ($\alpha<1$), which means
that a branch is chosen more than the other at the beginning of the experiment, but branches are
eventually uniformly crossed. Finally, there are no values which imply both strong pheromone
deposits ($c<1$) and a strong differential sensitivity ($\alpha>1$). Therefore, we can conclude that
pheromone deposits are weak with a good confidence but we cannot confidently decide for~$\alpha$.

The values obtained by \cite{Deneubourg1990} ($\alpha=2$, $c=20$) and more recently by
\cite{Garnier2009} ($\alpha=2.6$, $c=60$) are both in the confidence intervals for the WLSE found in
Table~\ref{tab:exp}. But the values of $\alpha$ suggested by these authors are out of the 95\%
confidence interval for the MLE. Thus these parameters, which decide for a slow branch selection,
are no more likely than a parameter set which would yield non selection of a path.

Figure~\ref{fig:ellipse} illustrates the fact that the two estimators are strongly positively correlated. There seems to be two cutoff values for $c$: if $\hat c^*>8$, then~${\hat\alpha^*>1}$, and if $\hat{c}^*<1.5$, then~${\hat{\alpha}^*<1}$. The above mentioned values reported by
\cite{Deneubourg1990} and \cite{Garnier2009} exhibit these features: they both have $c>8$ and
$\alpha>1$ and $\alpha$ increase with~$c$.

If we fix the value of~$c$ and estimate only~$\alpha$, then the 95\% Bootstrap confidence intervals
for~$\alpha$ are smaller. Figure \ref{fig:profil} shows the estimated values of~$\alpha$ and the
confidence intervals as functions of the fixed value of~$c$. We see that if $c$ is greater than~$6$
for the MLE (or than~$12$ for the WLSE), then the confidence intervals of~$\alpha$ lie entirely
above~$1$. Furthermore if~$c$~is less than~$0.8$ for the MLE (or than~$2$ for the WLSE), then the
confidence intervals of~$\alpha$~lie entirely under~$1$. This shows that if the deposits are weak
enough, i.e.~$c>12$, we can conclude that a slow selection of a branch will occur with
probability~1. On the other hand, if the deposits are strong enough, i.e.~$c<0.8$, we can conclude
that branches will eventually be uniformly crossed with probability~1.

\begin{figure}[h!]
\begin{minipage}[c]{0.49\textwidth}
\subfigure[MLE]{\includegraphics[width=\textwidth,clip]{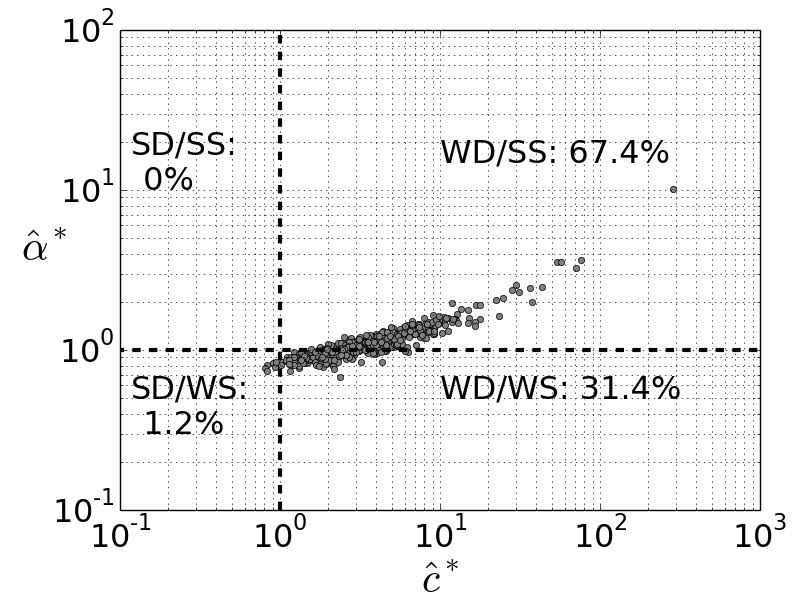}}
\end{minipage} \hfill
\begin{minipage}[c]{0.49\textwidth}
\subfigure[WLSE]{\includegraphics[width=\textwidth,clip]{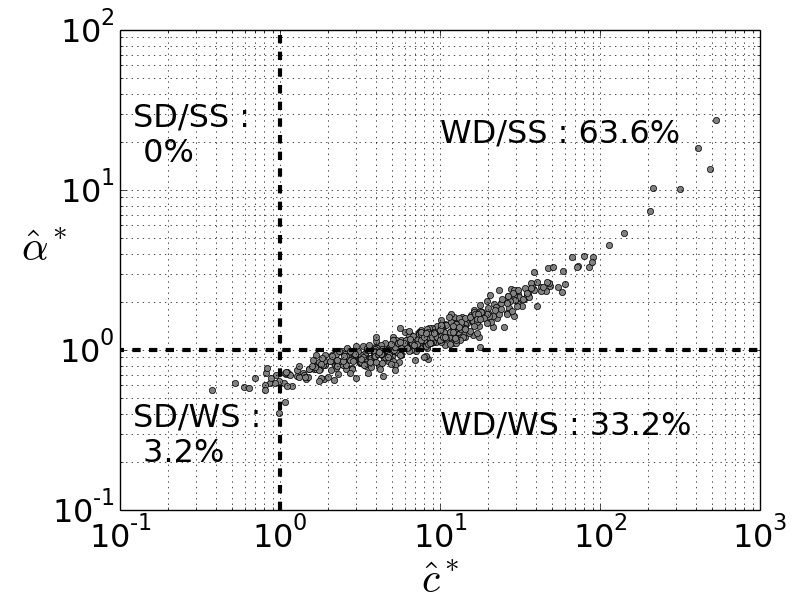}}
\end{minipage}
\caption{\label{fig:ellipse} Log-log scatterplots of the estimates $(\hat\alpha^*,\hat{c}^*)$ for the 500 Bootstrap samples for the MLE (a) and the WLSE (b) for the 50 paths of real ants.}
\end{figure}

\begin{figure}[h!]
\begin{minipage}[c]{.49\textwidth}
	\centering 
	\subfigure[MLE]{\includegraphics[width=\textwidth]{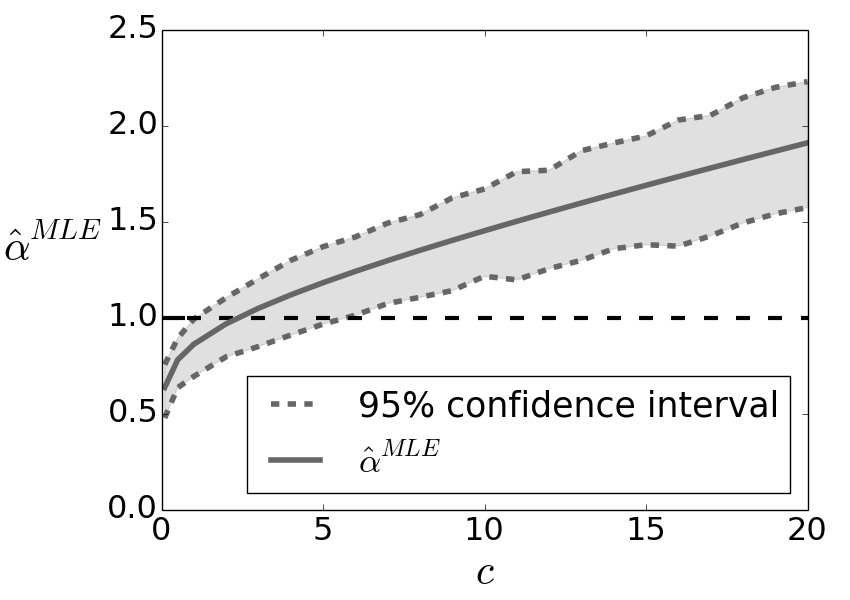}}
\end{minipage} \hfill
\begin{minipage}[c]{.49\textwidth}
	\centering
	\subfigure[WLSE]{\includegraphics[width=\textwidth]{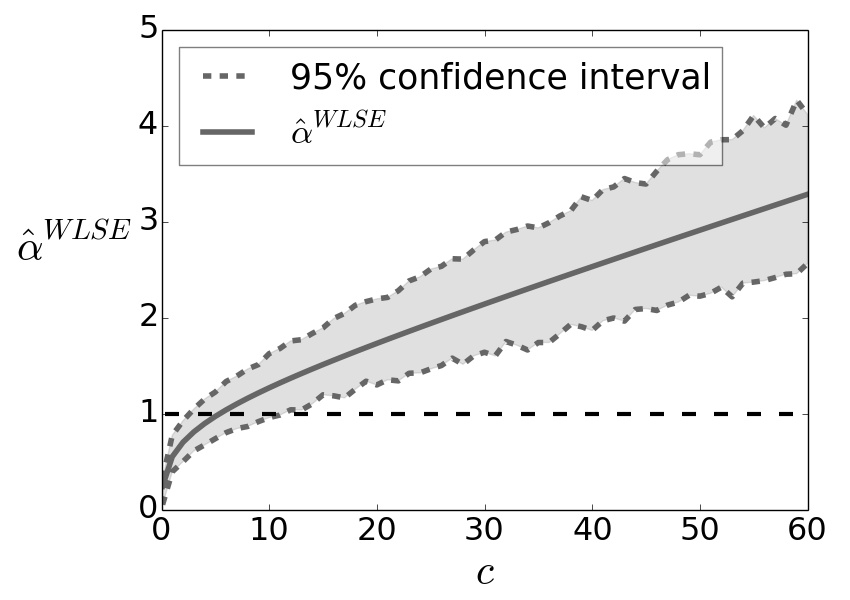}}
\end{minipage}
\caption{\label{fig:profil} Graph of the estimates $\hat\alpha$ and their Bootstrap 95\% confidence interval for the MLE (a) and the WLSE (b) for the 50 paths of real ants as a function of fixed value of $c$.}
\end{figure}

\section{Concluding remarks}
\label{sec:concluding}

In the literature no parameter estimation methods for reinforced random walks can be found. To
partially fill this void, this article proposes a statistical framework to estimate the parameter
of a general two-colored urn model. We define the maximum likelihood estimator (MLE) and the
weighted least squares estimators (WLSE) for the parameter of this model and prove their consistency
and their asymptotically normality under some usual regularity conditions. The proof lies on a general
result for a large class of estimators called minimum contrast estimators. The MLE is asymptotically
efficient, but can be difficult (lengthy) to compute, which can be an issue specially while using
Bootstrap algorithms. The WLSE is a suitable alternative. Moreover this estimator is popular among
practitioners.

We apply this statistical tools to the problem of path selection by an ant colony. To this
purpose, we performed experiments with actual ants to collect data. The experiment consisted of
introducing one hundred ants into a $Y$ shaped device, one at a time, and observing their successive
choices. We also consider the particular urn model introduced by \cite{Deneubourg1990} to describe
this phenomenon. This urn has two parameters,~$\alpha$ and $c$, which have distinct biological
interpretations, but contribute to the same effect: either selection of a branch or uniformization
of the choices. The parameter $c$ influences the short term behavior, whereas $\alpha$ determines
the asymptotic behavior. Consequently the model exhibits four phases which are illustrated by
Figure~\ref{fig:phase_diag}. The case most commonly considered in the literature is the case of slow
selection, which corresponds to~${\alpha>1}$ and~${c>1}$: the ants will eventually always choose the
same branch, but this selection will take a long time. For instance, \cite{Deneubourg1990} provides
the values~${\alpha=2}$ and~${c=20}$. However other phases can be relevant to describe the ant
behavior. For instance the fast uniformization, corresponding to~${\alpha<1}$ and~$c>1$, can model
the less likely but not negligible case in which ants do not select a branch.

After assessing the accuracy of the MLE and the WLSE on simulated data, we estimate the value
of~$\alpha$ and~$c$ with the two estimators. We also evaluate confidence regions by Bootstrap
proceeding. The estimated values of~$\alpha$ and~$c$ ranged between~1.1 and~3 and between~3 and~7,
respectively. This tends to imply that slow selection of a branch will occur. However, the Bootstrap
sample gives a confidence level of~${65\%}$ for the hypothesis of slow selection, while the
hypothesis of fast uniformization has a confidence of~${35\%}$.

This low level of confidence for the commonly assumed slow selection phase might be explained by
technical reasons. The number of experiments~(50) is relatively small; increasing the number of
replicas will reduce the confidence regions. Moreover the competition between the parameters for the
same effect induces an identifiability issue. For instance the apparent preference of a branch may
be due to~${\alpha>1}$ or~$c$ small with respect to~1. Therefore, the model, which is biologically
relevant, is statistically difficult to estimate. Indeed, for an ethological study, discriminating
the ant pheromone sensitivity from the pheromone deposit strength is meaningful. But for a
statistical procedure, the similarity of effect of the two parameters scales down the estimation
performance.

However, the uncertainty may not come from an inefficiency of the statistical procedure, but from
shortcomings of the ethological hypotheses. Indeed, the estimated confidence intervals computed from
the experimental data are larger than the ones computed from the simulated data (for similar
parameter values). Moreover the assumption that the inter-individual variability is negligible is
strong. For instance, it may be necessary to consider that the pheromone deposit varies at each
passage, i.e. that $c$ is random.

These ethological considerations will be further discussed in a forthcoming paper which will analyze
more elaborated experimental designs. The ants will be observed while freely evolving in a network
with several nodes. In addition of a data analysis, we will model the experiment with a reinforced
random walk on a finite graph for which we have provided probabilistic results~\citedef{LeGoff2015}.
The statistical methodology introduced in this paper will be extended to a larger class of
reinforced random walks.

\section{Proofs}
\label{sec:proofs}

\subsection{Distribution of $Z_k$, for $k\in\Nset$}
\label{sec:P(Zk=i)}

In order to compute the distribution of $Z_k$, we introduce some notation. Let~$\suite_k$ be the set of sequences of length $k+1$ of integers $i_0,\dots,i_k$ such that $i_0=0$ and $i_j-i_{j-1}\in\{0,1\}$ for $j=1,\dots,k$. For $i\leq k$ let $\suite_k(i) = \{(i_0,\dots,i_k) \in \suite_k\mid i_k=i\}$. Then we have
\begin{equation}
\label{eq:proba-Zk}
\pr(Z_k=i) = \sum_{(i_0,\dots,i_k)\in\suite_k(i)} \prod_{q=0}^{k-1} f_0(i_q,q-i_q)^{i_{q+1}-i_q} (1-{f}_0(i_q,q-i_q))^{1-i_{q+1}+i_q} \; . 
\end{equation}

\subsection{A central limit theorem for the empirical conditional probabilities}

For $0 \leq i \leq k \leq n-1$, recall the definition of $a_N(i,k-i)$ and $p_N(i,k-i)$ in~(\ref{eq:empiriques}) and that~${\bar{f}_0(i,k-i)=1-f_0(i,k-i)}$.

\begin{lemma}
\label{lem:clt-empirique}
$\{\sqrt{N}(p_N(i,k-i)-f_0(i,k-i)), 0 \leq i \leq k \leq n-1\}$ converges weakly to a Gaussian vector with diagonal covariance matrix $\Gamma_0$ with diagonal elements 
\begin{align}
\gamma_0(i,k-i) = \frac{f_0(i,k-i)\bar{f}_0(i,k-i)}  {\pr(Z_k=i)} \; .  \label{eq:diagonale}
\end{align}
\end{lemma}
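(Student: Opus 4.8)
The plan is to express each centred and rescaled empirical probability as a ratio whose numerator satisfies a multivariate central limit theorem across the $N$ independent experiments and whose denominator converges almost surely to a positive constant. Since $f$ takes values in $(0,1)$, every state is reachable, so $\pr(Z_k=i)>0$ for all $0\le i\le k$. Using that $f_0(Z_k^j,k-Z_k^j)=f_0(i,k-i)$ on $\{Z_k^j=i\}$, I would set
\[
  M_N(i,k-i) = \frac1{\sqrt N}\sum_{j=1}^N \1{Z_k^j=i}\bigl(X_{k+1}^j-f_0(i,k-i)\bigr),
\]
and read off from the definitions of $a_N$ and $b_N$ the exact identity
\[
  \sqrt N\bigl(p_N(i,k-i)-f_0(i,k-i)\bigr) = \frac{M_N(i,k-i)}{a_N(i,k-i)} .
\]
By the strong law of large numbers $a_N(i,k-i)\to\pr(Z_k=i)>0$ almost surely, so by Slutsky's lemma the result will follow once I establish the joint weak convergence of the vector $\{M_N(i,k-i)\}$ and divide each coordinate by its corresponding $\pr(Z_k=i)$.

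Each summand $\1{Z_k^j=i}(X_{k+1}^j-f_0(i,k-i))$ depends only on the $j$-th path, so the summand vectors indexed by $j$ are i.i.d.; each coordinate is bounded, and it is centred because $\esp[X_{k+1}\mid\mcf_k]=f_0(Z_k,k-Z_k)$ gives $\esp[\1{Z_k=i}(X_{k+1}-f_0(i,k-i))]=0$. The multivariate central limit theorem then yields that $\{M_N(i,k-i)\}$ converges weakly to a centred Gaussian vector whose covariance equals that of a single representative summand vector. The only substantive point left is to compute this covariance and to check that it is diagonal.

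For a diagonal entry I would condition on $\mcf_k$: as $\var(X_{k+1}\mid\mcf_k)=f_0\bar f_0$ on $\{Z_k=i\}$,
\[
  \esp\bigl[\1{Z_k=i}(X_{k+1}-f_0(i,k-i))^2\bigr] = \pr(Z_k=i)\,f_0(i,k-i)\,\bar f_0(i,k-i),
\]
so dividing by $\pr(Z_k=i)^2$ produces precisely $\Gamma_0(i,k-i)$. For the off-diagonal entries there are two cases. If $k=k'$ and $i\ne i'$, the indicators $\1{Z_k=i}$ and $\1{Z_k=i'}$ have disjoint supports and the product of the two summands is identically zero. If $k\ne k'$, say $k<k'$, the martingale structure is decisive: $\1{Z_k=i}(X_{k+1}-f_0(i,k-i))$ is $\mcf_{k'}$-measurable because $k+1\le k'$, the factor $\1{Z_{k'}=i'}$ is $\mcf_{k'}$-measurable, and $\esp[X_{k'+1}-f_0(i',k'-i')\mid\mcf_{k'}]=0$ on $\{Z_{k'}=i'\}$; conditioning on $\mcf_{k'}$ therefore kills the expectation of the product. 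Hence the covariance is diagonal.

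Combining the multivariate central limit theorem for $\{M_N(i,k-i)\}$ with the almost sure convergence of the denominators via Slutsky's lemma gives the announced weak convergence of $\{\sqrt N(p_N(i,k-i)-f_0(i,k-i))\}$ to a centred Gaussian vector with diagonal covariance $\Gamma_0$. I expect the main obstacle to be the different-time off-diagonal case $k\ne k'$: within a single experiment the variables at distinct times are genuinely dependent, and the vanishing of the covariance is not a moment identity but a consequence of conditioning at the larger time index and invoking the martingale-difference property.
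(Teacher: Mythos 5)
Your proof is correct, and it follows the same basic route as the paper: an exact ratio decomposition of $p_N-f_0$, the multivariate CLT across the $N$ i.i.d.\ experiments, and Slutsky's lemma with the almost surely convergent denominators $a_N(i,k-i)\to\pr(Z_k=i)>0$. In fact your decomposition and the paper's are algebraically identical: the paper writes $p_N-f_0$ in terms of $b_N-b$ and $a_N-a$, where $b(i,k-i)=\pr(Z_k=i,X_{k+1}=1)$ and $a(i,k-i)=\pr(Z_k=i)$, and since $b=f_0\,a$ that two-term expression collapses exactly to your single ratio $(b_N-f_0a_N)/a_N=M_N/(\sqrt N\,a_N)$. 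Where you genuinely add something is in the covariance computation: the paper applies the CLT to the joint vector of $(b_N-b,a_N-a)$ and then dismisses the identification of the limiting covariance as ``tedious computations using the Markov property which we omit.'' Your choice of centering $b_N$ by $f_0a_N$ rather than by its mean makes the numerator a normalized sum of bounded, centred, i.i.d.\ (across $j$) terms $\1{Z_k^j=i}(X_{k+1}^j-f_0(i,k-i))$, and this martingale-difference structure is exactly what renders the omitted computation transparent: diagonal entries come from the conditional variance of $X_{k+1}$ given $\mcf_k$, same-time off-diagonal entries vanish because the indicators have disjoint supports, and different-time entries vanish by conditioning on the $\sigma$-field at the larger time index. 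So your proof is, in effect, a completed version of the paper's sketch, bought at no extra cost. One technicality that both you and the paper pass over silently: the identity $\sqrt N(p_N-f_0)=M_N/a_N$ holds only on the event $\{a_N(i,k-i)>0\}$ (under the convention $\tfrac00=0$ it fails otherwise), but since $\pr(Z_k=i)>0$ this event has probability tending to one, so the asserted weak convergence is unaffected.
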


\begin{proof}
Define $b_N(i,k-i) = N^{-1} \sum_{j=1}^N \1{Z_k^{j}=i}X_{k+1}^j$, the empirical estimate of $b(i,k-i) = \pr(Z_{k}=i,X_{k+1}=1)$ and $a(i,k-i) = \esp[a_N(i,k-i)] = \pr(Z_k=i)$. Write then
\begin{align*}
p_N(i,k-i)-f_0(i,k-i) =& \frac{b_{N}(i,k-i)-b(i,k-i)}{a_N(i,k-i)}\\
	& -\frac{b(i,k-i)}{a_N(i,k-i)a(i,k-i)} (a_N(i,k-i)-a(i,k-i)) \; .
\end{align*}
Since the paths $(Z_1^j,\dots,Z_n^j)$, $1 \leq j \leq N$ are i.i.d., the multivariate central limit holds for the sequence of $2n(n-1)$ dimensional vectors $\{(b_{N}(i,k-i)-b(i,k-i),a_N(i,k-i)-a(i,k-i)),0\leq i \leq k \leq n-1\}$. The proof is concluded by tedious computations using the Markov property, which we omit.
\end{proof}

\begin{remark}
\label{rem:mle-tautologie}
We can prove that the covariance matrix $\Gamma_0$ is diagonal by a statistical argument. If we consider the tautological model $\{f(i,k-i),0 \leq i \leq k \leq n-1\}$, i.e. $\theta=f$ and~$f_0$ is the true value. Then the likelihood is
\begin{multline*}
L_N(f) = \sum_{k=0}^{n-1} \sum_{i=0}^k a_N(i,k-i) \{p_N(i,k-i)\log f(i,k-i) 
\\+ q_N(i,k-i)\log\bar{f}(i,k-i)\} \; ,
\end{multline*}
where $\bar{f}(i,k-i)=1-{f}(i,k-i)$. Thus we see that ${\{p_N(i,k-i), 0\leq i \leq k\leq n-1\}}$ is the maximum likelihood estimator of~$f_0$. This model is a regular statistical model, thus~${\sqrt{N}(p_N-f_0)}$ converges weakly to the Gaussian distribution with covariance matrix~${I_n^{-1}(f_0)}$, where $I_n(f)$ is the Fisher information matrix of the model. It is easily seen that~${I_n(f_0)}$ is the $n(n-1)$ dimensional  diagonal matrix with diagonal elements given by~(\ref{eq:diagonale}).
\end{remark}

\subsection{A general result for minimum contrast estimators}
\label{sec:MCE}

Theorems~\ref{theo:consistence-clt-emv} and~\ref{theo:wlse} are a consequence of the general result we prove in this section. More precisely we demonstrate the consistency and the asymptotic normality of a general estimator of which the MLE and the WLSE are particular cases.

For $0 \leq i \leq k \leq n-1$, recall the definition of $a_N(i,k-i)$,  $p_N(i,k-i)$ in~(\ref{eq:empiriques}) and that $\bar{f}(\theta,i,k-i)=1-f(\theta,i,k-i)$. Let $w_N(i,k-i)$, $0\leq i \leq k \leq n-1$ be a sequence of random weights and let $G$ be function defined on $[0,1]\times(0,1)$. Define the empirical contrast function by 
\begin{align*}
\mbw_N(\theta) = \sum_{k=0}^{n-1} \sum_{i=0}^k w_N(i,k-i) G(p_N(i,k-i),f(\theta,i,k-i)) \; .
\end{align*}
For instance, choosing $G(p,q) = - p \log q - (1-p)\log(1-q)$ and $w_N(i,k-i)=a_N(i,k-i)$ yields
\begin{align*}
\mbw_N(\theta)  = &- \sum_{k=0}^{n-1} \sum_{i=0}^k a_N(i,k-i) \big\{p_N(i,k-i) \log f(\theta,i,k-i) \\
&+ q_N(i,k-i) \log \bar{f}(\theta,i,k-i)\big\} \\
	= & -N^{-1} L_N(\theta) \; ,
\end{align*}
so that minimizing $\mbw_N$ is equivalent to maximizing the likelihood~$L_N$, defined in~\eqref{eq:def-LN}. Choosing $G(p,q)=(p-q)^2$  yields the weighted least squares contrast function~$W_N$, defined in~\eqref{eq:def-WN}. We now define the minimum contrast estimator of~$\theta_0$ by
\begin{align*}
\mcemodtheta=\arg\min_{\theta\in\Theta} \mbw_N(\theta) \; .
\end{align*}

In order to prove the consistency and asymptotic normality of $\mcemodtheta$, we make the following assumptions on $G$ and on the weights $w_N(i,k-i)$.  Let~$\partial_2G$ and~$\partial_2^2G$ denote the first and second derivatives of $G$ with respect to its second argument.
\begin{hyp}
\label{hypo:contrast}
The function $G$ is non negative, twice continuously differentiable on $[0,1]\times(0,1)$ with $G(p,q)-G(p,p)>0$ if $p\ne q$,  $\partial_2G(p,p)=0$ and~${\partial_2^2G(p,p)>0}$.
\end{hyp}

\begin{hyp}
\label{hypo:poids}
For all $0 \leq i \leq k \leq n-1$, $w_N(i,k-i)$ converge almost surely to $w_0(i,k-i)$ and $w_0(i,k-i)>0$.
\end{hyp}

\begin{theorem}
\label{theo:Big}
If Assumptions~\ref{hypo:modele-regulier}-\eqref{item:regularite}, \ref{hypo:modele-regulier}-\eqref{item:identifiability}, \ref{hypo:contrast} and~\ref{hypo:poids} hold, then $\mcemodtheta$ is consistent. If moreover~$\theta_0$ is an interior point of $\Theta$ and Assumption~\ref{hypo:modele-regulier}-\eqref{item:invertibilite} holds, then $\sqrt{N}(\mcemodtheta-\theta_0)$ converges weakly to a Gaussian distribution with zero mean.
\end{theorem}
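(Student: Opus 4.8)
The plan is to prove this through the standard two-step argument for minimum contrast estimators: first establish consistency via uniform convergence of the contrast function to a limit having a well-separated minimum at $\theta_0$, then derive asymptotic normality via a Taylor expansion of the gradient around $\theta_0$ combined with the central limit theorem from Lemma~\ref{lem:clt-empirique}.

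For consistency, I would first identify the pointwise limit of $\mbw_N(\theta)$. By Lemma~\ref{lem:clt-empirique}, $p_N(i,k-i)$ converges almost surely to $f_0(i,k-i)$, and by Assumption~\ref{hypo:poids}, $w_N(i,k-i)$ converges almost surely to $w_0(i,k-i)$. Since $G$ is continuous, $\mbw_N(\theta)$ converges almost surely to
\begin{align*}
  \mbw(\theta) = \sum_{k=0}^{n-1} \sum_{i=0}^k w_0(i,k-i)\, G(f_0(i,k-i), f(\theta,i,k-i)) \;.
\end{align*}
Because $\Theta$ is compact and the parametrization $\theta \mapsto f(\theta,i,k-i)$ is continuous (Assumption~\ref{hypo:modele-regulier}-\eqref{item:regularite}), the convergence is in fact uniform on $\Theta$: the summand is a continuous function of the finitely many pairs $(p_N, w_N)$ jointly with $\theta$ on a compact set. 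The key point is that $\theta_0$ is the unique minimizer of $\mbw$. Indeed, each term $G(f_0(i,k-i), f(\theta,i,k-i))$ is minimized precisely when $f(\theta,i,k-i) = f_0(i,k-i)$, by the property $G(p,q) - G(p,p) > 0$ for $p \ne q$ in Assumption~\ref{hypo:contrast}; since $w_0(i,k-i) > 0$, the sum is minimized only when $f(\theta,i,k-i) = f_0(i,k-i)$ for all $0 \leq i \leq k \leq n-1$, which by the identifiability Assumption~\ref{hypo:modele-regulier}-\eqref{item:identifiability} forces $\theta = \theta_0$. Standard M-estimation theory (the argmax/argmin theorem: uniform convergence plus a well-separated unique minimizer) then yields $\mcemodtheta \to \theta_0$ almost surely.

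For asymptotic normality, I would expand the first-order condition $\dot{\mbw}_N(\mcemodtheta) = 0$ (valid since $\theta_0$, hence eventually $\mcemodtheta$, is interior). The gradient is $\dot{\mbw}_N(\theta) = \sum_{k,i} w_N(i,k-i)\, \partial_2 G(p_N, f(\theta,i,k-i))\, \dot{f}(\theta,i,k-i)$. A Taylor expansion around $\theta_0$ gives
\begin{align*}
  0 = \dot{\mbw}_N(\theta_0) + \ddot{\mbw}_N(\bar\theta_N)(\mcemodtheta - \theta_0)
\end{align*}
for some $\bar\theta_N$ between $\mcemodtheta$ and $\theta_0$. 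The Hessian $\ddot{\mbw}_N(\bar\theta_N)$ converges almost surely to $\ddot{\mbw}(\theta_0)$; using $\partial_2 G(p,p) = 0$ and $\partial_2^2 G(p,p) > 0$, this limit equals $\sum_{k,i} w_0(i,k-i)\, \partial_2^2 G(f_0, f_0)\, \dot{f}_0(i,k-i)\dot{f}_0(i,k-i)'$, which is positive definite by Assumption~\ref{hypo:modele-regulier}-\eqref{item:invertibilite} (the linear independence of the gradient vectors). For the leading term, note that $\partial_2 G(f_0(i,k-i), f_0(i,k-i)) = 0$, so a further expansion of $\partial_2 G$ in its first argument gives $\partial_2 G(p_N, f_0) = \partial_1\partial_2 G(f_0, f_0)(p_N - f_0) + o_P(N^{-1/2})$, whence $\sqrt{N}\,\dot{\mbw}_N(\theta_0)$ is, up to negligible terms, a fixed linear transform of the asymptotically Gaussian vector $\sqrt{N}(p_N - f_0)$ from Lemma~\ref{lem:clt-empirique}. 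Slutsky's lemma and the continuous mapping theorem then deliver $\sqrt{N}(\mcemodtheta - \theta_0) \weakconv \mathcal{N}(0, \Sigma_n(\theta_0))$ with $\Sigma_n(\theta_0) = \ddot{\mbw}^{-1}(\theta_0) H(\theta_0) \ddot{\mbw}^{-1}(\theta_0)$, where $H(\theta_0)$ is the covariance of the limiting score.

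I expect the main obstacle to be the bookkeeping in the two-variable Taylor expansion of $\partial_2 G$: one must expand in \emph{both} arguments simultaneously and verify that the cross term $\partial_1\partial_2 G(f_0,f_0)$ together with the diagonal covariance $\Gamma_0$ from Lemma~\ref{lem:clt-empirique} assembles correctly into $H(\theta_0)$, while checking that the remainder terms (involving $w_N - w_0$, the second-order remainder in $p_N - f_0$, and the gap $\bar\theta_N - \theta_0$) are all $o_P(1)$ after multiplication by $\sqrt{N}$. The randomness of the weights $w_N$ requires some care, but since they converge almost surely and multiply quantities that are themselves $O_P(1)$ or $o_P(1)$, they can be replaced by $w_0$ in the limit by Slutsky-type arguments without affecting the asymptotic distribution.
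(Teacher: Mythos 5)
Your proposal is correct and follows essentially the same route as the paper's proof: consistency via almost sure (uniform) convergence of $\mbw_N$ to $\mbw$, uniqueness of the minimizer from Assumptions~\ref{hypo:contrast} and~\ref{hypo:modele-regulier}-\eqref{item:identifiability}, and asymptotic normality via the first-order condition, a Taylor expansion of the gradient, positive definiteness of $\ddot{\mbw}(\theta_0)$ from the linear-independence assumption, and Lemma~\ref{lem:clt-empirique} for the score. Your explicit expansion $\partial_2 G(p_N,f_0)=\partial_{12}^2G(f_0,f_0)(p_N-f_0)+o_P(N^{-1/2})$ is exactly the delta-method step the paper invokes by citation, yielding the same limiting covariance $\ddot{\mbw}^{-1}(\theta_0)H(\theta_0)\ddot{\mbw}^{-1}(\theta_0)$.
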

The exact expression of the variance is given in the proof. 

\begin{proof}
Under Assumption~\ref{hypo:contrast}, the strong law of large numbers shows that~$\mbw_N(\theta)$ converges almost surely to
\begin{align*}
\mbw (\theta) =  \sum_{k=0}^{n-1} \sum_{i=0}^k w_0(i,k-i) G(f_0(i,k-i),f(\theta,i,k-i)) \; .
\end{align*}

Assumptions~\ref{hypo:modele-regulier}-\eqref{item:identifiability} and~\ref{hypo:contrast} ensure that $\theta_0$ is the unique minimum of $\mbw$. Indeed, $G(p,q)>0$ if $p\neq q$ and $G(p,p)=0$. Thus, $\mbw$ is minimized by any value of $\theta$ such that $f(\theta,i,k-i)=f(\theta_0,i,k-i)$. By Assumption~\ref{hypo:modele-regulier}-\eqref{item:identifiability}, this implies $\theta=\theta_0$.

Moreover the convergence of~$\mbw_N$ to $\mbw$ is uniform, since $\Theta$ is compact and the function~$f$ is twice continuously differential with respect to~$\theta$, its first variable. This yields the consistency of $\mcemodtheta$. For the sake of completeness, we give a brief proof. Since~$\theta_0$ minimizes~$\mbw$ and~$\mcemodtheta$ minimizes~$\mbw_N$, we have
\begin{align*}
0 & \leq  \mbw(\mcemodtheta) - \mbw(\theta_0)\\
	& = \mbw(\mcemodtheta) - \mbw_N(\mcemodtheta) + \mbw_N(\mcemodtheta)- \mbw_N(\theta_0)  + \mbw_N(\theta_0) - \mbw(\theta_0)  \\
	& \leq \mbw(\mcemodtheta) -\mbw_N(\mcemodtheta) + \mbw_N(\theta_0) - \mbw(\theta_0) \leq 2 \sup_{\theta\in\Theta} |\mbw_N(\theta)-\mbw(\theta)| \; .
\end{align*}
Since~$\theta_0$ is the unique minimizer of~$\mbw$, for~$\epsilon>0$, we can find~$\delta$ such that if~${\theta\in\Theta}$ and~$\|\theta-\theta_0\|>\epsilon$, then $\mbw(\theta)- \mbw(\theta_0) \geq \delta$. Thus
\begin{align*}
\pr(\| \hat\theta_N - \theta_0\| > \epsilon) & \leq \pr( \mbw(\hat\theta_N) -\mbw(\theta_0) \geq \delta)\\
& \leq \pr \left(2\sup_{\theta\in\Theta} |\mbw_N(\theta)-\mbw(\theta)| \geq \delta \right) \to 0 \; .
\end{align*}
The central limit theorem is a consequence of the consistency and Lemma~\ref{lem:clt-empirique}. A first order Taylor extension of $\dot{\mbw}_N(\theta)$ at $\theta_0$ yields
\begin{align*}
0 & = \dot{\mbw}_N(\mcemodtheta) = \dot{\mbw}_N(\theta_0) + \ddot{\mbw}_N(\tilde\theta_N) (\mcemodtheta-\theta_0) \; ,
\end{align*}
where $\tilde\theta_N\in[\theta_0,\mcemodtheta]$.  Setting $\dot{f}_0(i,k-i) = \dot{f}(\theta_0,i,k-i)$, we have 
\begin{align*}
\dot{\mbw}_N(\theta_0) = \sum_{k=0}^{n-1} \sum_{i=0}^k w_N(i,k-i) \partial_2G(p_N(i,k-i),f_0(i,k-i)) \dot{f}_0(i,k-i) \; .
\end{align*}

Let $\partial_{12}^2 G$ be the mixed second derivative of $G$.  Note that $$\partial_2G(f_0(i,k-i),f_0(i,k-i))=0\:.$$ Thus, by the delta-method \cite[see][Theorem~3.3.11]{Dacunha-castelle1986} and since $w_N$ converges almost surely to $w_0$, we obtain that $\sqrt{N} \dot{\mbw}_N(\theta_0)$ converges weakly towards
\begin{align*}
\sum_{k=0}^{n-1}\sum_{i=0}^k w_0(i,k-i) \partial_{12}^2G (f_0(i,k-i),f_0(i,k-i)) \Lambda_0(i,k-i) \dot{f}_0(i,k-i) \; ,
\end{align*}
where $\Lambda_0(i,k)$ are independent Gaussian random variables with zero mean and variance~$\gamma_0(i,k)$ defined in~\ref{eq:diagonale}. Equivalently, $ \sqrt{N} \dot{\mbw}_N(\theta_0)$ converges weakly to a Gaussian vector with zero mean and covariance matrix $H(\theta_0)$ defined by
\begin{multline*}
H(\theta_0) = \sum_{k=0}^{n-1}\sum_{i=0}^k w_0^2(i,k-i) \{\partial_{12}^2 G(f_0(i,k-i),f_0(i,k-i)) \}^2\\
\times\gamma_0(i,k) \dot{f}_0(i,k-i) (\dot{f}_0(i,k-i))' \; .
\end{multline*}
By the law of large numbers, $\ddot{\mbw}_N(\theta)$ converges almost surely to $\ddot{\mbw}(\theta)$ and this convergence is also locally uniform. Thus, $\ddot{\mbw}_N(\tilde\theta_N)$ converges almost surely to~$\ddot{\mbw}(\theta_0)$. Using again the fact that $\partial_2G(p,p)=0$, we obtain
\begin{multline*}
\ddot{\mbw}(\theta_0) = \sum_{k=0}^{n-1}\sum_{i=0}^k w_0(i,k-i) \partial_2^2G (f_0(i,k-i),f_0(i,k-i))\\
\times\dot{f}_0(i,k-i) (\dot{f}_0(i,k-i))' \; .
\end{multline*}
Denote for brevity $g(i,k-i) = w_0(i,k-i) \partial_2^2G (f_0(i,k-i),f_0(i,k-i))$. Then, for any $u\in\Rset^d$, we have
\begin{align}
\label{eq:pos_def}
u \ddot{\mbw}(\theta_0)u' & = \sum_{k=0}^{n-1} \sum_{i=0}^k g(i,k-i) \left( \sum_{s=1}^d u_s \partial_sf(\theta_0,i,k-i) \right)^2 \; .
\end{align}
By assumption~\ref{hypo:poids}, $g(i,k-i)>0$ for all $0\leq i \leq k \leq n-1$, thus~\eqref{eq:pos_def} is zero only if for all~${k=0,\dots,n-1}$ and $i=0,\dots,k$, we have~$\sum_{s=1}^d u_s \partial_s f(\theta_0,i,k)=0$. By Assumption~\ref{hypo:modele-regulier}-\eqref{item:invertibilite}, this is possible only if $u_s=0$ for all $s=1,\dots,d$. Thus~$\ddot{\mbw}(\theta_0)$ is positive definite.

We can now conclude that for large enough $N$, $\ddot{\mbw}_N(\tilde\theta_N)$ is invertible and we can write
\begin{align*}
\sqrt{N}(\mcemodtheta-\theta_0) = - \ddot{\mbw}_N^{-1}(\tilde\theta_N) \sqrt{N} \dot{\mbw}_N(\theta_0) \; .
\end{align*}
The right hand side converges weakly to the Gaussian distribution with zero mean and covariance matrix $\ddot{\mbw}^{-1}(\theta_0)H(\theta_0)\ddot{\mbw}^{-1}(\theta_0)$.
\end{proof}

\subsection{Proofs of theorems~\ref{theo:consistence-clt-emv} and~\ref{theo:wlse}}
\label{sec:proof-theo-estim}

Theorems~\ref{theo:consistence-clt-emv} and~\ref{theo:wlse} are a consequence of Theorem~\ref{theo:Big}. 

\begin{lemma}
\label{lem:poids}
Assumption \ref{hypo:poids} holds for the weights $w_N(i,k-i) = a_N(i,k-i)$ and $w_N(i,k-i) = a_N(i,k-i)p_N^{-1}(i,k-i)q_N^{-1}(i,k-i)$, $0\le i\le k\le n-1$.
\end{lemma}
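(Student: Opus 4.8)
The plan is to check the two requirements of Assumption~\ref{hypo:poids} --- almost sure convergence of each weight sequence and strict positivity of the limit --- separately for the two families of weights. Both verifications rest on the strong law of large numbers (SLLN) applied to the i.i.d.\ paths $(Z_1^j,\dots,Z_n^j)$, $1\le j\le N$, together with the strict positivity $f_0(i,k-i)\in(0,1)$ guaranteed by Assumption~\ref{hypo:modele-regulier}-\eqref{item:regularite} and by the fact that $f$ takes values in $(0,1)$.

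First I would treat $w_N(i,k-i)=a_N(i,k-i)$. Since $\1{Z_k^j=i}$, $1\le j\le N$, are i.i.d.\ Bernoulli variables with mean $\pr(Z_k=i)$, the SLLN gives $a_N(i,k-i)\to\pr(Z_k=i)$ almost surely, so the candidate limit is $w_0(i,k-i)=\pr(Z_k=i)$. To see that this limit is strictly positive I would use the explicit formula~(\ref{eq:proba-Zk}): it suffices to exhibit one admissible path $(i_0,\dots,i_k)\in\suite_k(i)$, for instance the path that increases at the first $i$ steps and then remains constant, and to observe that each of its factors equals either $f_0(i_q,q-i_q)$ or $1-f_0(i_q,q-i_q)$ with $0\le i_q\le q\le k-1\le n-1$, hence is strictly positive because $f_0\in(0,1)$. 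Thus $\pr(Z_k=i)>0$, and Assumption~\ref{hypo:poids} holds for $a_N$.

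Next, for $w_N(i,k-i)=a_N(i,k-i)p_N^{-1}(i,k-i)q_N^{-1}(i,k-i)$, I would first recall that $p_N(i,k-i)\to f_0(i,k-i)$ almost surely: writing $p_N=b_N/a_N$ with $b_N(i,k-i)\to\pr(Z_k=i,X_{k+1}=1)$ and $a_N(i,k-i)\to\pr(Z_k=i)>0$ by the SLLN, the ratio converges almost surely to $\pr(X_{k+1}=1\mid Z_k=i)=f_0(i,k-i)$. Because $f_0(i,k-i)\in(0,1)$, we also have $q_N=1-p_N\to\bar f_0(i,k-i)\in(0,1)$; in particular, on the almost sure event of convergence, $p_N$ and $q_N$ are bounded away from $0$ and $1$ for all large $N$, so the inverses are eventually well defined and the convention $0/0=0$ in~(\ref{eq:empiriques}) plays no role in the limit. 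Continuity of the product and quotient then yields almost sure convergence of the weight to $w_0(i,k-i)=\pr(Z_k=i)/\{f_0(i,k-i)\bar f_0(i,k-i)\}$, which is strictly positive by the positivity of $\pr(Z_k=i)$ just established and of $f_0,\bar f_0$.

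The only delicate point --- and the nearest thing to an obstacle --- is the strict positivity of $\pr(Z_k=i)$, i.e.\ that every state $i\in\{0,\dots,k\}$ is reachable at time $k$; this is exactly where the strict inequalities $f_0\in(0,1)$ are needed, since they guarantee that both increments occur with positive probability at each step, so that any monotone lattice path carries positive weight in~(\ref{eq:proba-Zk}). Everything else reduces to the SLLN and the continuity of elementary arithmetic operations at a limit point lying in the open interval $(0,1)$.
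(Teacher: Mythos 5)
Your proof is correct and takes essentially the same approach as the paper's: almost sure convergence of $a_N$, $p_N$, $q_N$ by the strong law of large numbers applied to the i.i.d.\ paths, and strict positivity of the limits deduced from $f_0(i,k-i)\in(0,1)$ together with formula~(\ref{eq:proba-Zk}) for $\pr(Z_k=i)$. You merely spell out details the paper leaves implicit, namely the reachability argument giving $\pr(Z_k=i)>0$ and the eventual well-definedness of $p_N^{-1}q_N^{-1}$.
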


\begin{proof}
For all $0\le i\le k\le n-1$, $a_N(i,k-i)$ converges almost surely to~${\pr(Z_k=i)}$ and $a_N(i,k-i)p_N^{-1}(i,k-i)q_N^{-1}(i,k-i)$ to $\pr(Z_k=i)f_0(i,k-i)^{-1}\bar f_0(i,k-i)^{-1}$. Moreover Assumption~\ref{hypo:modele-regulier}-\eqref{item:regularite} implies that $f_0(i,k-i)>0$ and $\bar  f_0(i,k-i)>0$ for all $0 \leq i \leq k \leq n-1$. Using Formula~(\ref{eq:proba-Zk}), this in turn implies that $\pr(Z_k=i)>0$ for all $0 \leq i \leq k \leq n-1$.
\end{proof}

\begin{proof}[Proof of Theorem \ref{theo:consistence-clt-emv}]
As mentioned above, the maximum likelihood estimator minimizes the contrast function $\mbw$ written with the function $G(p,q)=-p\log q-{(1-p)}{\log(1-q)}$ and the weights $a_N(i,k-i)$. Thus the proof of Theorem~\ref{theo:consistence-clt-emv} consists in checking Assumption~\ref{hypo:contrast} and~\ref{hypo:poids} to apply Theorem~\ref{theo:Big}. Lemma~\ref{lem:poids} implies that Assumption~\ref{hypo:poids} holds.

The function $G$ considered here satisfies Assumption~\ref{hypo:contrast}. Indeed, for ${p,q\in(0,1)}$, define $$K(p,q) = G(p,q)-G(p,p) = p\log(p/q) + (1-p)\log((1-p)/(1-q)\:.$$ Remark that $K(p,q)$ is the Kullback-Leibler distance between the Bernoulli measures with respective  success probabilities $p$ and $q$.  Then it is well known that $K(p,q)>0$ except if $p=q$. Indeed, by Jensen's inequality,
\begin{align*}
K(p,q) \geq -\log(pq/p + (1-p)(1-q)/(1-p))=\log1 = 0 \; ,
\end{align*}
and by strict concavity of the log function, equality holds only if $p=q$. Moreover, $\partial_2G(p,q)=-p/q+(1-p)/(1-q)$ so $\partial_2G(p,p)=0$ and ${\partial_2^2G(p,p)=p^{-1}(1-p)^{-1}>0}$.
\end{proof}

\begin{proof}[Proof of Theorem~\ref{theo:wlse}]
Again, the proof consists in checking Assumption~\ref{hypo:contrast} and~\ref{hypo:poids} to apply Theorem~\ref{theo:Big}. The latter holds by virtue of Lemma~\ref{lem:poids} and Assumption~\ref{hypo:contrast} trivially holds for the function $G(p,q)=(p-q)^2$.
If ${w_N(i,k-i)} = {p_N^{-1}(i,k-i)}q_N^{-1}(i,k-i)a_N(i,k-i)$, then 
\begin{multline}
H(\theta_0) = 2 \ddot \mbw (\theta_0)= 4\fisher(\theta_0)\\
=4\sum_{k=0}^{n-1} \sum_{i=0}^k \frac{\pr(Z_k=i)}{ f_0(i,k-i) \bar{f}_0(i ,k-i)} \dot{f}_0(i,k-i) (\dot{f}_0(i,k-i))' \label{eq:wlse-hessien-efficient}\;.
\end{multline}
So that $\Sigma_n(\theta_0) = \ddot{\mbw}^{-1}(\theta_0)H(\theta_0)\ddot{\mbw}^{-1}(\theta_0)=\fisher^{-1}(\theta_0)$.
\end{proof}

If the weights are chosen as $w_N(i,k)=a_N(i,k)$, then $w_0(i,k-i) = \pr(Z_k=i)$ and 
\begin{align}
H(\theta_0) & = 4 \sum_{k=0}^{n-1} \sum_{i=0}^k \pr(Z_k=i) f_0(i,k-i) \bar{f}_0(i ,k-i) \dot{f}_0(i,k-i) (\dot{f}_0(i,k-i))'  \label{eq:H-wlse-pratique}\; , \\
\ddot{W}(\theta_0) & = 2 \sum_{k=0}^{n-1} \sum_{i=0}^k \pr(Z_k=i) \dot{f}_0(i,k-i) (\dot{f}_0(i,k-i))' \;. \label{eq:ddotW-wlse-pratique}
\end{align}

\subsection{Proofs of Corollaries~\ref{theo:mle-rrw} and~\ref{theo:wlse-rrw}}
\label{sec:proof-rrw}

Corollaries~\ref{theo:mle-rrw} and~\ref{theo:wlse-rrw} are a consequence of Theorem~\ref{theo:Big}. The assumptions on the weights~$w_N$ and on the functions~$G$ have been already verified in the previous section. We have to prove the Assumption~\ref{hypo:modele-regulier} on the choice function~$f$ defined in~\eqref{eq:f}. Hypothesis~\ref{hypo:modele-regulier}-\eqref{item:regularite} is obvious.

By elementary computations, we have, for $0 \leq i \leq k \leq n-1$,
\begin{align}
  f(\alpha,c,i,k-i) =f(\alpha_0,c_0,i,k-i)  
  & \Leftrightarrow \left(\frac{c+i}{c+k-i}\right)^\alpha =
  \left(\frac{c_0+i}{c_0+k-i}\right)^{\alpha_0} \nonumber  \\
  & \Leftrightarrow \frac{ \alpha}{\alpha_0} = \frac{\log(c_0+i) - \log(c_0+k-i)}
  {\log(c+i) - \log(c+k-i)} \; . \label{eq:equality-alpha}
\end{align}
Plugging the pairs $(i,k)=(0,1)$ and $(i,k)=(0,2)$ into~(\ref{eq:equality-alpha}) yields 
\begin{align*}
  \frac{\log(c_0) - \log(c_0+1)} {\log(c) - \log(c+1)} = \frac{\log(c_0) -
    \log(c_0+2)} {\log(c) - \log(c+2)} \; ,
\end{align*}
or equivalently
\begin{align}
  \label{eq:equality-c}
  \frac{\log(1+1/c_0)}{\log(1+2/c_0)} = \frac{\log(1+1/c)}{\log(1+2/c)} \; .
\end{align}
It is easily checked that the function $x\to \log(1+x)/\log(1/2x)$ is strictly increasing on~${(0,\infty)}$. Thus~(\ref{eq:equality-c}) implies that $c=c_0$. Plugging this equality into~(\ref{eq:equality-alpha}) yields~${\alpha=\alpha_0}$. This proves Assumption~\ref{hypo:modele-regulier}-\eqref{item:identifiability}.

We now prove that if $n\geq2$, the vectors $\{\partial_\alpha f(\theta_0,i,k-i), 0 \leq i \leq k \leq n-1\}$ and $\{\partial_c f(\theta_0,i,k-i), 0 \leq i \leq k \leq n-1\}$ are linearly independent in~$\Rset^{n(n-1)}$.  For~${0 \leq i \leq k \leq n-1}$, we have,
\begin{align*}
\partial_\alpha f(\alpha,c,i,k-i) & = f(\alpha,c,i,k-i){f}(\alpha,c,k-i,i) \log\left(\frac{c+i}{c+k-i}\right) \; ,  \\
\partial_c f(\alpha,c,i,k-i) & = f(\alpha,c,i,k-i) f(\alpha,c,k-i,i) \frac{\alpha(k-2i)}{(c+i)(c+k-i)} \; .
\end{align*}
Let $(u,v)\in\Rset^2$ and assume that for all $i,j\leq n-1$ such that $i+j\leq n-1$, it holds that
\begin{align*}
u \log\left(\frac{c_0+i}{c_0+j}\right) + v \frac {\alpha_0(j-i)} {(c_0+i)(c_0+j)}=0 \; .
\end{align*}
Replacing $(i,j)$ for instance successively by $(0,1)$ and $(0,2)$ yields
\begin{align*}
	\begin{cases}
	u \log\left(\dfrac{c_0}{c_0+1}\right) + v \dfrac {\alpha_0}{c_0(c_0+1)}=0 \; , \\
	u \log\left(\dfrac{c_0}{c_0+2}\right) + v \dfrac {2\alpha_0}{c_0(c_0+2)}=0 \; .
	\end{cases}
\end{align*}
If $(u,v)\ne(0,0)$, this implies
\begin{align*}
\dfrac{c_0 +2}{c_0}\log\left(\dfrac{c_0+2}{c_0}\right)+2\dfrac{c_0 +1}{c_0}\log\left(\dfrac{c_0+1}{c_0}\right) = 0 \; .
\end{align*}
By strict convexity of the function $x\to x\log x$ on $(0,\infty)$, this is impossible. Thus~${u=v=0}$ and Assumption~\ref{hypo:modele-regulier}-\eqref{item:invertibilite} holds.

\subsection{Proof of Theorem~\ref{theo:fisher-one-path}}
\label{sec:proof-theo-one-path}

\begin{proof}[Proof of Theorem~\ref{theo:fisher-one-path}, case $\alpha_0=1$]
In this case the model is P\'olya's urn, and we have  
\begin{align}
\label{eq:fisher-alpha=1}
\fisher(c) & = \sum_{k=0}^{n-1} \frac1{2c+k} \left\{ \esp \left[
\frac1{c+Z_k}\right] + \esp\left[ \frac1{c+k-Z_k} \right]- \frac{4}{2c+k} \right\} \; . 
\end{align}
The distribution of $Z_k$ is given by
\begin{align*}
\pr(Z_k=i) = \binom{k}{i} \frac{c(c+1)\cdots(c+i-1)\times c(c+1)\cdots(c+k-i-1)}{2c(2c+1)\cdots(2c+k-1)} \; .
\end{align*}
Thus, 
\begin{align*}
\esp \left[ \frac1{c+Z_k}\right] = \sum_{i=0}^k \binom{k}{i} \frac{c(c+1)\cdots(c+i-1)\times c(c+1)\cdots(c+k-i-1)}{2c(2c+1)\cdots(2c+k-1)} \frac1{c+i} \; .
\end{align*}
For any $c>0$, there exists constants $C_1<C_2$ such that, for all integers $h\geq1$, 
$$
C_1 h^c \leq \prod_{i=1}^h (1+c/i) \leq C_2h^c \; .
$$
Therefore, there exists a constant $C>0$ such that for all $k\geq1$,
\begin{align*}
\esp \left[ \frac1{c+Z_k}\right] & \leq C k^{-2} \sum_{i=1}^{k-1} \left(\frac{i}{k}\right)^{c-2} \left(1-\frac{i}{k}\right)^{c-1}
= \begin{cases} O(k^{-1}) \mbox{ if } c>1 \; , \\
    O(k^{-1} \log k) \mbox{ if } c=1 \; , \\
    O(k^{-c}) \mbox{ if } c<1 \; .
\end{cases}
\end{align*}
In all three cases, we obtain that the first series in~(\ref{eq:fisher-alpha=1}) is summable. By symmetry, the sum of the second expectations is also finite. 
\end{proof}

\begin{proof}[Proof of Theorem~\ref{theo:fisher-one-path}, case $\alpha_0<1$]
In this case, we know by Theorem~\ref{theo:rrw-behavior} that~$Z_n/n$ converges almost surely to 1/2. This implies that $f(\theta,Z_n,n-Z_n) $ converges almost surely to 1/2 for all $\theta$. By Cesaro's Lemma, this implies that~$n^{-1}\ell_n(\theta) \to -\log2$ a.s.
\end{proof}

\begin{proof}[Proof of Theorem~\ref{theo:fisher-one-path}, case $\alpha_0>1$]
Let $\Omega_1$ be the event that color 1 is eventually selected, which happens with probability 1/2 by Theorem~\ref{theo:rrw-behavior}. Then, on $\Omega_1$, ${Z_n/n\to1}$ and if~$k>\dernier_\infty$, then $X_{k+1} = 1$ and $Z_k=k-\total_\infty$. Thus for large enough $n$, the log-likelihood on one path becomes
\begin{multline*}
\ell_n(\theta) = \sum_{k=0}^{\dernier_\infty} X_{k+1} \log f(\theta,Z_k,k-Z_k) + (1-X_{k+1}) \log\{1-f(\theta,Z_k,k-Z_k)\} \\
+ \sum_{k=\dernier_\infty+1}^n \log f(\theta,k-\total_\infty,\total_\infty) \; .
\end{multline*}
As $k\to\infty$, for any $\alpha>0$, 
\begin{align*}
\log f(\theta,k-\total_\infty,\total_\infty) = - \log \left \{ 1 + \frac{(c+\total_\infty)^\alpha}{(c+k-\total_\infty)^\alpha} \right\} \sim - \frac{(c+\total_\infty)^\alpha}{(c+k-\total_\infty)^\alpha} \; .
\end{align*}
If $\alpha\leq1$ the series is divergent and thus $\lim_{n\to\infty} \ell_n(\theta)=-\infty$. If $\alpha>1$ then the series is convergent and thus, on  $\Omega_1$,
\begin{align*}
\lim_{n\to\infty} \ell_n(\theta) & = \sum_{k=0}^{\infty} X_{k+1} \log f(\theta,Z_k,k-Z_k) + (1-X_{k+1}) \log\{1-f(\theta,Z_k,k-Z_k)\}\\
	& = \sum_{k=0}^{\dernier_\infty} X_{k+1} \log f(\theta,Z_k,k-Z_k) + (1-X_{k+1}) \log\{1-f(\theta,Z_k,k-Z_k)\} \\
	& \hfill+ \sum_{\dernier_\infty+1}^\infty \log f(\theta,k-\total_\infty,Q_\infty) \; .
\end{align*}
This implies that $\arg\max_{\theta\in\Theta} \ell_n(\theta) = \arg\max_{\theta\in\Theta,\alpha>1} \ell_n(\theta)$ and that this argmax is a random variable which is a function of the whole path, and does not depend on the true value~$\theta_0$.
\end{proof}

\paragraph{Acknowledgment} 
We thanks Guy Theraulaz and Hugues Chat\'e for providing their material framework and their field expertise to allow the first author to collect the data of the Argentine ants experiments. These experiments are part of the project TRACES supported by the CNRS. They were done during two visits in April and July 2012 of the first author to the Centre de Recherches sur la Cognition Animale (CRCA, Centre de Recherches sur la Cognition Animale, UMR 5169, Paul Sabatier University, Toulouse), whose hospitality is gratefully acknowledged.

\bibliographystyle{DeGruyter}
\bibliography{bibLGS}

\end{document}